\documentclass[12pt,reqno]{amsart}

\usepackage{amssymb,amsthm}
\usepackage{amsfonts,cmtiup,comment,stmaryrd}

\usepackage{mathrsfs,cmtiup}

\makeatletter
\def\blfootnote{\xdef\@thefnmark{}\@footnotetext}
\makeatother

\newtheorem{theorem}{Theorem}[section]
\newtheorem{lemma}[theorem]{Lemma}
\newtheorem{prop}[theorem]{Proposition}
\newtheorem{corollary}[theorem]{Corollary}

\theoremstyle{definition}

\newtheorem{remark}[theorem]{Remark}
\newtheorem*{definition*}{Definition}

\newcommand{\N}{\mathbb N}
\newcommand{\F}{\mathbb F}

\newcommand{\f}{\varphi}

\newcommand{\g}{\gamma }

\renewcommand{\geq}{\geqslant}
\renewcommand{\leq}{\leqslant}

\newcommand{\ed} {\end{document}}

\let\leq=\leqslant
\let\geq=\geqslant
\setlength{\topmargin}{-0.5cm} \setlength{\oddsidemargin}{-0.0cm}
\setlength{\evensidemargin}{-0.0cm}
\pagestyle{plain}
\textwidth=16.4cm
\textheight=24cm
\footskip=1cm
\numberwithin{equation}{section}

\begin{document}
\title{On finite groups with an automorphism of prime order\\ whose fixed points have bounded Engel sinks}

\author{E. I. Khukhro}
\address{Charlotte Scott Research Centre for Algebra, University of Lincoln, U.K., and \newline \indent  Sobolev Institute of Mathematics, Novosibirsk, 630090, Russia}
\email{khukhro@yahoo.co.uk}

\author{P. Shumyatsky}

\address{Department of Mathematics, University of Brasilia, DF~70910-900, Brazil}
\email{pavel@unb.br}

\keywords{Finite groups; Engel condition; Fitting subgroup; automorphism}
\subjclass[2010]{20D45, 20D25, 20F45}

\begin{abstract}
A left Engel sink of an element $g$ of a group $G$ is a set ${\mathscr E}(g)$ such that for every $x\in G$ all sufficiently long commutators $[...[[x,g],g],\dots ,g]$ belong to ${\mathscr E}(g)$.  (Thus, $g$ is a left Engel element precisely when we can choose ${\mathscr E}(g)=\{ 1\}$.) We prove that if a finite group $G$ admits an automorphism $\f $ of prime order coprime to $|G|$ such that for some positive integer $m$ every element of the centralizer $C_G(\f )$ has a left Engel sink of cardinality at most $m$, then the index of the second Fitting subgroup $F_2(G)$ is bounded in terms of $m$.

A right Engel sink of an element $g$ of a group $G$ is a set ${\mathscr R}(g)$ such that for every $x\in G$ all sufficiently long commutators $[...[[g,x],x],\dots ,x]$ belong to ${\mathscr R}(g)$.  (Thus, $g$ is a right Engel element precisely when we can choose ${\mathscr R}(g)=\{ 1\}$.) We prove that if a finite group $G$ admits an automorphism $\f $ of prime order coprime to $|G|$ such that for some positive integer $m$ every element of the centralizer $C_G(\f )$ has a right Engel sink of cardinality at most $m$, then the index of the Fitting subgroup $F_1(G)$ is bounded in terms of $m$.
\end{abstract}
\maketitle

\section{Introduction}
\baselineskip=16pt

We use the left-normed simple commutator notation
$$[a_1,a_2,a_3,\dots ,a_r]:=[...[[a_1,a_2],a_3],\dots ,a_r]$$
and the abbreviation $[a,\,{}_kb]:=[a,b,b,\dots, b]$ where $b$ is repeated $k$ times.
Recall that an element $g$ of a  group $G$ is called a left Engel element if for every $x\in G$ the equation $[x,\,{}_{n} g]=1$ holds for some $n=n(x)$ depending on $x$. An element $g\in G$ is called a right Engel element if for every $x\in G$ the equation $[g,\,{}_{n} x]=1$ holds for some $n=n(x)$. An Engel group is defined as a group in which every element is a left Engel element (equivalently, every element is a right Engel element). Finite Engel groups are nilpotent by Zorn's theorem \cite[12.3.4]{rob}. Moreover, by Baer's theorem \cite[12.3.7]{rob}, a left Engel element of a finite group belongs to its Fitting subgroup, and a right Engel element of a finite group belongs to its hypercentre.  For many classes of groups theorems related to properties of Engel elements have been proved, achieving various structural properties related to nilpotency. For example, Wilson and Zelmanov \cite{wi-ze} proved that profinite Engel groups are locally nilpotent. At the same time, Golod's examples \cite{gol} show that Engel groups in general may not be locally nilpotent.

In recent papers \cite{khu-shu16,
khu-shu18,
khu-shu18a,
khu-shu19,
khu-shu20,
khu-shu-tra} we considered generalizations of Engel conditions for finite, profinite, and compact groups  using the concept of Engel sinks.

\begin{definition*} \label{dl}
 A \textit{left Engel sink} of an element $g$ of a group $G$ is a set ${\mathscr E}(g)$ such that for every $x\in G$ all sufficiently long commutators $[x,g,g,\dots ,g]$ belong to ${\mathscr E}(g)$, that is, for every $x\in G$ there is a positive integer $l(x,g)$ such that
 $[x,\,{}_{l}g]\in {\mathscr E}(g)$ for all $l\geq l(x,g).
 $
 \end{definition*}
 \noindent (Thus, $g$ is a left Engel element precisely when we can choose ${\mathscr E}(g)=\{ 1\}$, and $G$ is an Engel group when we can choose ${\mathscr E}(g)=\{ 1\}$ for all $g\in G$.)

 \begin{definition*} \label{dr}
 A \textit{right Engel sink} of an element $g$ of a group $G$ is a set ${\mathscr R}(g)$ such that for every $x\in G$ all sufficiently long commutators $[g,x,x,\dots ,x]$ belong to ${\mathscr R}(g)$, that is, for every $x\in G$ there is a positive integer $r(x,g)$ such that
 $[x,\,{}_{r}g]\in {\mathscr R}(g)$ for all $r\geq r(x,g).
 $
 \end{definition*}
 \noindent (Thus, $g$ is a right Engel element precisely when we can choose ${\mathscr R}(g)=\{ 1\}$, and $G$ is an Engel group when we can choose ${\mathscr R}(g)=\{ 1\}$ for all $g\in G$.)

In \cite{khu-shu18,khu-shu20} we considered finite, profinite, and compact (Hausdorff) groups in which  every element has a finite or countable left Engel sink and proved that then the group is close to being locally nilpotent in a certain precise sense. In \cite{khu-shu19,khu-shu20a} we obtained similar results concerning
right Engel sinks.

When $G$ is a finite group, then every element   has the \emph{smallest} left Engel sink, since the intersection of two left Engel sinks ${\mathscr E}'(g)$ and ${\mathscr E}''(g)$ is again a left Engel sink of $g$. Similarly,
every element  $g\in G$  has the \emph{smallest} right Engel sink. In this paper we shall always use the notation ${\mathscr E}(g)$ and ${\mathscr R}(g)$  to denote the smallest left and right Engel sinks of $g$, respectively, thus eliminating the ambiguity of this notation in the above definitions.

For finite groups we proved in  \cite{khu-shu18} and  \cite{khu-shu19} the following quantitative results.

\begin{theorem}[{\cite[Theorem~3.1]{khu-shu18}}]\label{t-f}
Let $G$ be a finite group, and $m$  a positive integer. Suppose that every element $g\in  G$ has a left Engel sink ${\mathscr E}(g)$ of cardinality  at most~$m$.
 Then $G$ has a normal subgroup $N$ of order bounded in terms of $m$ such that $G/N$ is nilpotent.
\end{theorem}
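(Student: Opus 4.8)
The plan is to prove the equivalent statement that the nilpotent residual $\gamma_{\infty}(G)=\bigcap_{k}\gamma_{k}(G)$ has $m$-bounded order: indeed $G/\gamma_{\infty}(G)$ is nilpotent, and any normal $N$ with $G/N$ nilpotent contains $\gamma_{\infty}(G)$. Throughout I would use the elementary description of the \emph{smallest} left Engel sink in a finite group: writing $\tau_{g}\colon x\mapsto[x,g]$, one has $\mathscr{E}(g)=\{[x,\,{}_{|G|}g]:x\in G\}=\mathrm{Im}\,\tau_{g}^{|G|}$; in particular $\tau_{g}$ permutes $\mathscr{E}(g)$, and, conversely, whenever $\tau_{g}$ restricts to a permutation of a subset $T\subseteq G$ one has $T\subseteq\mathscr{E}(g)$. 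The consequence I would lean on is the coprime observation: if $g$ is a $q$-element and $V$ is a $g$-invariant abelian $q'$-section of $G$, then $\tau_{g}$ is an endomorphism of $V$, $V=C_{V}(g)\oplus[V,g]$ with $\tau_{g}$ acting as an automorphism of $[V,g]$, so $[V,g]$ lies in the image of $\mathscr{E}(g)$ in $V$ and hence $|[V,g]|\leq m$; when $g$ acts fixed-point-freely on $V$ this already gives $|V|\leq m$.

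The first reduction is to soluble groups. For a nonabelian finite simple group $S$ and $1\neq g\in S$, a counting argument on the orbits of $\tau_{g}$ on $S$ together with standard lower bounds on $|g^{S}|$ shows that the left Engel sink of $g$ computed in $S$ has cardinality unbounded as $|S|\to\infty$; applying this to generators of the nonabelian composition factors of $G$, and to diagonal elements of products of such factors to bound how many factors a nonabelian chief factor may have, one concludes that the soluble residual $G^{(\infty)}$ has $m$-bounded order. Since $\gamma_{\infty}(G)G^{(\infty)}/G^{(\infty)}=\gamma_{\infty}(G/G^{(\infty)})$ and passing to quotients preserves the hypothesis, we may replace $G$ by $G/G^{(\infty)}$ and assume $G$ soluble.

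For soluble $G$ the work is to bound chief factors and then to assemble. Using Baer's theorem (left Engel elements of a finite group lie in $F(G)$), above $F(G)$ one can always find a $q$-element $g\notin F(G)$ to which the coprime observation applies. Given a chief factor $V$ of $G$ of characteristic $p$ not centralised by $G$, one analyses a Hall $p'$-subgroup of $G/C_{G}(V)$ layer by layer through its (generalised) Fitting structure: each coprime $q$-element contributes $|[V,g]|\leq m$, and a suitable element acts fixed-point-freely on $V$ once the $p'$-part has been understood, giving $|V|\leq m$; moreover if the characteristic $p$ exceeded $m$ then every $p'$-element would centralise $V$, forcing $G/C_{G}(V)$ to be a $p$-group acting faithfully and irreducibly on a module in its own characteristic, hence trivial --- so $p$ too is $m$-bounded. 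One then bounds the Fitting height $h(G)$ (an unbounded Fitting series, via the coprime identities $[Q,g]=[Q,g,g]=\dots$ read along the Frattini layers of the relevant $q'$-subgroups $Q$, would produce an element with unboundedly many distinct nontrivial layers and hence an unbounded sink) and induces on $h(G)$: $G/F(G)$ acts on the now-$m$-bounded chief factors inside $F(G)$, which controls the step from $\gamma_{\infty}(G/F(G))$ to $\gamma_{\infty}(G)$ and ultimately covers $\gamma_{\infty}(G)$ by boundedly many $m$-bounded sections.

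The main obstacle is the non-coprime case together with the bookkeeping of the assembly. When the prime $q$ divides $|F(G)|$ the splitting $V=C_{V}(g)\oplus[V,g]$ and the permutation property of $\tau_{g}$ both fail, and one must insert Hall--Higman-type arguments (and the Thompson $A\times B$-lemma, and the structural results on Engel sinks from the companion papers cited above) to trade the offending $q$-section for a $q'$-section on which the coprime machinery runs, all while carrying along the quantities already bounded. The subtler point is that a single element $g$ does not see the fixed-point subspaces on the deeper layers; one must therefore use the hypothesis for all elements $xg$ with $x$ in the relevant normal subgroup, and exploit that each $\tau_{xg}$ acts as a permutation on its sink, to pin those subspaces down. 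Making all of the accumulated bounds --- on chief-factor orders, on the relevant primes, on the number of primes, and on the Fitting height --- compose into a single $m$-bound on $|\gamma_{\infty}(G)|$ is where the genuine effort lies.
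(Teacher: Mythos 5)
This statement (Theorem~\ref{t-f}) is quoted from \cite[Theorem~3.1]{khu-shu18} and is not reproved in the present paper, so there is no in-paper proof to match; your toolkit (the smallest sink, the fact that $x\mapsto[x,g]$ permutes $\mathscr{E}(g)$, the coprime identity $[V,g]=\mathscr{E}_{V\langle g\rangle}(g)$ as in Lemma~\ref{l-sink}, and Lemma~\ref{l33}-type bounding of $|[V,U]|$) is indeed the right one and matches the cited works. But what you have written is a roadmap with genuine gaps, not a proof. The first gap is the reduction to soluble groups: you infer that $|G^{(\infty)}|$ is $m$-bounded from the facts that the non-abelian composition factors have $m$-bounded order and that a non-abelian chief factor has $m$-boundedly many simple factors. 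That inference is false as stated: composition-factor data do not bound the soluble residual. For instance, $B=A\rtimes S$ with $S$ a fixed small non-abelian simple group acting faithfully and irreducibly on an elementary abelian group $A$ of huge order has bounded non-abelian composition factors with multiplicity one, yet $B^{(\infty)}\geq [A,S]=A$ is huge. The sink hypothesis does exclude such configurations, but only via the analysis of abelian chief factors that you postpone until after the reduction, so the argument is circular as written. (Compare Proposition~\ref{pr}: the robust route is to bound the index of the soluble radical using involutions and inverted odd-order elements, then treat the soluble part, and the final assembly again uses the hypothesis, not just the two separate bounds.)

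The second gap is the soluble case itself, whose decisive steps are asserted rather than proved. The claim that ``a suitable element acts fixed-point-freely on $V$'' is unjustified and in general no such element exists; what actually bounds a non-central chief factor $V$ of characteristic $p$ is that $O_p(G/C_G(V))=1$ for a faithful irreducible module in characteristic $p$, so $F(G/C_G(V))$ is a $p'$-group, $C_V(F(\bar G))=0$, hence $V=[V,F(\bar G)]$ and Lemma~\ref{l33} applies --- a step your sketch never isolates. The non-coprime difficulties are deferred to unspecified ``Hall--Higman-type arguments'' and the Thompson $A\times B$ lemma with no indication of how they enter, and the final assembly --- bounding the Fitting height, bounding how many non-central chief factors can occur, and converting all of the partial bounds into a single bound on $|\gamma_\infty(G)|$ --- is exactly the substance of the theorem; you yourself defer it as ``where the genuine effort lies''. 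Until those steps are carried out, the proposal establishes the easy reductions and the correct local estimates ($|[V,g]|\leq m$ in coprime abelian sections, $p\leq m$ for non-central chief factors) but not the theorem.
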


\begin{theorem}[{\cite[Theorem~3.1]{khu-shu19}}]\label{t-fr}
Let $G$ be a finite group, and $m$  a positive integer. Suppose that every element $g\in  G$ has a right Engel sink ${\mathscr R}(g)$ of cardinality  at most~$m$.
 Then $G$ has a normal subgroup $N$ of order bounded in terms of $m$ such that $G/N$ is nilpotent.
\end{theorem}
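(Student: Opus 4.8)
The plan is to reformulate the conclusion and then split the work into a non-soluble reduction and a soluble core. Since the nilpotent residual $\gamma_\infty(G)=\bigcap_k\gamma_k(G)$ is the smallest normal subgroup of $G$ with nilpotent quotient, the statement ``$G/N$ is nilpotent for some $m$-bounded normal $N$'' is equivalent to ``$|\gamma_\infty(G)|$ is bounded in terms of $m$''. Moreover $\gamma_\infty(G)=\langle\,{\mathscr R}(g)\mid g\in G\,\rangle$: the eventual (periodic) values $[g,{}_{n}x]$ lie in $\bigcap_k\gamma_k(G)$, so every smallest sink ${\mathscr R}(g)$ is contained in $\gamma_\infty(G)$; and modulo $N_0:=\langle{\mathscr R}(g)\mid g\in G\rangle$ --- which is normal because ${\mathscr R}(g^{h})={\mathscr R}(g)^{h}$ --- every element becomes a right Engel element, so $G/N_0$ is a finite Engel group, hence nilpotent by Zorn's theorem, forcing $\gamma_\infty(G)\le N_0$. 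Thus the whole task reduces to bounding the order of the subgroup generated by all the sinks; the difficulty is that a priori there may be as many as $|G|$ of them.

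For the non-soluble reduction I would use that the hypothesis passes both to subgroups (the smallest sink of $g$ in $H\le G$ lies inside ${\mathscr R}_G(g)$, there being fewer elements to commute with) and to quotients (images of sinks are sinks), together with the CFSG-based fact that a nonabelian finite simple group $S$ contains an element whose smallest right Engel sink has cardinality tending to infinity with $|S|$ --- obtained by choosing $x\in S$ of prime order with $|C_S(x)|$ small and tracking the iterates $y\mapsto[y,x]$. This bounds the orders and the number of the nonabelian composition factors of $G$, and hence, via the structure of $F^{*}(G/R(G))$, the index $|G:R(G)|$ of the soluble radical; a further use of the hypothesis to control the action of the ($m$-bounded) group $G/R(G)$ on $R(G)$ then lets one express $|\gamma_\infty(G)|$ in terms of $\gamma_\infty$ of the soluble radical up to an $m$-bounded factor, so that it suffices to treat soluble $G$.

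For soluble $G$ I would induct on the Fitting height $h$, the case $h\le 1$ being vacuous. Write $F=F(G)$. When $h=2$ one has $\gamma_\infty(G)\le F$, so $\gamma_\infty(G)$ is a normal nilpotent subgroup; splitting it into $p$-components and passing to $G/O_{p'}(G)$ turns the relevant action into a coprime one, and the bounds $|{\mathscr R}(x)|\le m$ for $x$ ranging over a Hall $p'$-subgroup then translate --- via standard estimates on coprime linear actions and Clifford theory --- into an $m$-bound on each $p$-component; for $h>2$ one feeds this back through the action of $G$ on $F/\Phi(F)$ to produce an $m$-bounded normal subgroup $N_1$ with $h(G/N_1)<h$, and finishes by induction, the preimage in $G$ of the bounded normal subgroup supplied there being again $m$-bounded with nilpotent quotient. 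The hard part will be this soluble core, and specifically the passage from the \emph{pointwise} information contained in the bounds $|{\mathscr R}(x)|\le m$ to a single bound on the $G$-invariant section of $F(G)$ that carries $\gamma_\infty(G)$: the ``obvious'' objects produced by the sink bounds (such as $[V,x]$ for individual elements $x$ acting on $V=F/\Phi(F)$) are not $G$-invariant, so this step is not formal, and it is exactly here that one needs genuine representation-theoretic input --- Clifford theory, and control of how large a module admitting a faithful action in which every element moves only a small subspace can be --- together with care over primes dividing both $|F(G)|$ and $|G/F(G)|$. Once the simple-group estimate is granted, the non-soluble reduction should by contrast be routine.
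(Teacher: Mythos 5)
Your attempt cannot be checked against an internal argument, because this paper does not prove Theorem~\ref{t-fr}: it is imported by citation from \cite{khu-shu19}. Judged on its own terms, your opening reduction is correct and cleanly done: since $[g,\,{}_n x]\in\gamma_{n+1}(G)$, every smallest sink satisfies ${\mathscr R}(g)\subseteq\gamma_\infty(G)$; the subgroup $N_0=\langle {\mathscr R}(g)\mid g\in G\rangle$ is normal, $G/N_0$ is an Engel group and hence nilpotent by Zorn, so $\gamma_\infty(G)=N_0$ and the theorem is indeed equivalent to an $m$-bound on $|\gamma_\infty(G)|$.

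Beyond that, however, the text is a plan rather than a proof, and it has concrete gaps. (i) Both load-bearing ingredients are asserted, not established: the CFSG-based claim that a nonabelian simple group contains an element whose smallest right Engel sink grows with the order of the group, and the soluble (metanilpotent) core, which you explicitly defer to ``standard estimates on coprime linear actions and Clifford theory''. The latter is where the content of the theorem lies (a statement of the type of Lemma~\ref{l33} is needed, plus an argument bounding the \emph{number} of primes or components involved: your sketch bounds each $p$-component of $\gamma_\infty(G)$ but never how many primes occur, and without that the product is not $m$-bounded). (ii) The induction on the Fitting height $h$ fails as stated: each step contributes an $m$-bounded normal subgroup and lowers $h$ by one, so the final normal subgroup has order at most an $m$-bounded quantity raised to roughly the power $h$, i.e.\ an $(m,h)$-bound, whereas the theorem demands a bound in $m$ alone; you never bound $h$ in terms of $m$. (This is repairable --- for soluble $G$ the $h=2$ case applied to $F_2(G)$ together with Lemma~\ref{l-gf} and the fact that $C_{G/F(G)}\bigl(F_2(G)/F(G)\bigr)\leq F_2(G)/F(G)$ bounds $|G/F_2(G)|$ and hence $h$ --- but no such argument appears.) (iii) In the non-soluble reduction, the step ``express $|\gamma_\infty(G)|$ in terms of $\gamma_\infty(R(G))$ up to an $m$-bounded factor'' is not a formal consequence of $|G/R(G)|$ being $m$-bounded: a group of bounded order acting on an abelian soluble radical can have arbitrarily large $[R(G),G]\leq\gamma_\infty(G)$. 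Excluding this requires applying the sink hypothesis to elements acting on the layers of $R(G)$, i.e.\ essentially the same module-theoretic work whose absence is noted in (i). So the reformulation is right and the strategy is plausible, but the quantitative heart of the theorem is missing.
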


As is well known, often strong consequences for the structure of the group can be
derived from certain conditions imposed on fixed points of automorphisms.
 Some of the best-known examples include Thompson's theorem \cite{tho} on the nilpotency of a finite group with a fixed-point-free automorphism of prime order, as well as numerous other papers on finite groups admitting automorphisms with various restrictions on their fixed points.
This type of results in relation to automorphisms whose fixed points have restrictions on their Engel sinks were  recently obtained in \cite{acc-khu-shu, acc-shu-sil18, acc-shu-sil19, acc-sil18, acc-sil20, khu-shu202}.

The purpose of this paper is to obtain the following `automorphism extensions' of Theorems~\ref{t-f} and~\ref{t-fr}; one may also regard these extensions as results on `almost fixed-point-free' automorphisms.

\begin{theorem}\label{t-main}
Let $G$ be a finite group admitting an automorphism $\f$ of prime order coprime to $|G|$. Let $m$ be a positive integer such that every element $g\in C_G(\f)$ has a left Engel sink ${\mathscr E}(g)$ of cardinality at most $m$. Then $G$ has a metanilpotent normal subgroup of index bounded in terms of $m$ only. \end{theorem}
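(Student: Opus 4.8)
The plan is to reduce the automorphism setting to the already-established Theorem~\ref{t-f} by a combination of coprime-action techniques and induction on $|G|$. First I would set up the standard coprime machinery: since $\f$ has prime order coprime to $|G|$, for every $\f$-invariant section $H/K$ of $G$ we have $C_{H/K}(\f)=C_H(\f)K/K$, and a $\f$-invariant subgroup on which $\f$ acts fixed-point-freely is nilpotent by Thompson's theorem (with nilpotency class bounded in terms of the order of $\f$, by Higman--Kreknin--Kostrikin). The hypothesis on left Engel sinks is inherited by $\f$-invariant sections: if $g\in C_{H/K}(\f)$ then $g=\bar{g}$ for some $g\in C_H(\f)$, and the image of $\mathscr{E}(g)$ in $H/K$ is a left Engel sink of $\bar g$ of cardinality at most $m$. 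So the hypotheses pass to quotients $G/N$ for $\f$-invariant normal $N$, and it suffices to bound the index of a metanilpotent normal subgroup.

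The key structural step is to analyze the action of $\f$ on $F:=F(G)$, the Fitting subgroup, and on $\bar G:=G/F$. Working prime by prime, write $F=\prod_p O_p(G)$. For an element $g\in C_G(\f)$ acting on a $\f$-invariant $q$-group $Q=O_q(G)$ by conjugation, I would use the classical fact (as in the proof of Theorem~\ref{t-f} in \cite{khu-shu18}) that the smallest left Engel sink $\mathscr{E}(g)$, when $g$ acts on a nilpotent group, controls the cardinality of $[Q,g,g,\dots]$, the eventual image of the map $x\mapsto [x,g]$; bounded sink size forces $|[Q,\,{}_k g]|\le m$ for large $k$, hence $g$ acts with bounded-size "non-Engel part" on each $O_q(G)$. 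Combining this over all primes dividing $|F|$, every element of $C_G(\f)$ acts on $F$ nilpotently modulo a subgroup of bounded order. The aim is then to show that $\bar G=G/F$ is nilpotent-by-bounded, which via the Hall--Higman-type reduction (the Fitting subgroup of $G/F(G)$ lifts and $F_2(G)/F(G)=F(G/F(G))$) yields that $F_2(G)$ has bounded index.

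To control $\bar G=G/F(G)$, I would use that $C_{\bar G}(\f)$ still satisfies the bounded left Engel sink hypothesis, and that $C_{\bar G}(\f)$ acts faithfully enough on $F$ — more precisely, I would pass to the semidirect product and use that in $G/F(G)$ every element $g$ with $\f$-conjugate behavior, acting on the $\f$-invariant Fitting subgroup, either lies close to being an Engel element (hence, by Baer's theorem applied in $\f$-invariant subsections, is pushed into the Fitting subgroup of a suitable section) or generates together with its sink a subgroup of bounded order. The decisive reduction is: by Theorem~\ref{t-f} applied to the subgroup $\langle C_G(\f)^G\rangle$ or, better, by induction combined with the fact that $[G,\f]$ is generated by $\f$-invariant subgroups on which the analysis above applies, one concludes that $C_G(\f)$ is nilpotent-by-(bounded order), and then the Khukhro--Makarenko--Shumyatsky-type results on almost fixed-point-free automorphisms (bounding $|G:F_2(G)|$ in terms of $|C_G(\f)|$ when the latter is nilpotent-by-bounded) finish the argument.

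The main obstacle I expect is the passage from a bound on Engel sinks of elements of $C_G(\f)$ to a genuine \emph{global} bound on a verbal-type subgroup of $C_G(\f)$ measuring its deviation from nilpotency: individual elements having small sinks does not immediately bound the relevant subgroup of $C_G(\f)$, and one must exploit the coprime automorphism to "spread" the local information — typically by showing that enough $\f$-invariant Sylow or Hall subgroups of $[G,\f]$ are themselves controlled, then assembling via the coprime generation lemma $G=\langle C_G(\f), [G,\f]\rangle$ and $[G,\f]=[[G,\f],\f]$. Handling the interaction between different primes in $F(G)$, and ensuring all the "bounded" quantities depend only on $m$ and not on $|G|$ or the order of $\f$, will require care; I would isolate this in a lemma stating that if every element of a coprime-$\f$-invariant section acts on $F(G)$ with bounded left Engel sink, then $C_G(\f)$ modulo a subgroup of $m$-bounded order is nilpotent.
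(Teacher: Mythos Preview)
Your proposal has a genuine gap: it provides no mechanism to eliminate the dependence on $p=|\f|$, and you explicitly flag this as unresolved.  All of the tools you invoke---Hartley--Meixner, Higman--Kreknin--Kostrikin, the ``almost fixed-point-free'' results---produce bounds that involve $p$.  Showing that $C_G(\f)$ is nilpotent-by-(\,$m$-bounded\,) and then appealing to such results only yields $|G/F_2(G)|$ bounded in terms of $p$ and $m$, which is precisely what the paper calls the ``weak'' bound (Lemma~\ref{l-sol}).  To get the $m$-only bound, the paper uses a dichotomy that your sketch misses: if $p\le m$, the weak $(p,m)$-bound is already an $m$-bound; if $p>m$, then since $\mathscr E(x)$ is $\f$-invariant of cardinality $<p$ for $x\in C_G(\f)$, one has $\mathscr E(x)\subseteq C_G(\f)$.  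This containment is the key input enabling a Clifford-theoretic argument (Lemma~\ref{l-gnc}) showing that for $N=\langle C_G(\f)^G\rangle$ one has $\gamma_\infty(N)\le C_G(\f)$; from this one deduces that \emph{every} element of $N$ (not just those in $C_G(\f)$) has left Engel sink of size at most $m$, so Theorem~\ref{t-f} bounds $|\gamma_\infty(N)|$ in terms of $m$, and since $G/N$ is nilpotent by Thompson, the result follows.

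A second omission is the reduction to the soluble case.  You implicitly assume solubility throughout (working with $F(G)$, Sylow systems, etc.), but nothing in your outline bounds the index of the soluble radical.  The paper handles this via a result of Guralnick and Tracey (Theorem~\ref{t-gt}), using an involution in $C_G(\f)$ whose set of inverted odd-order elements lies in its Engel sink; this is a separate argument using the classification and is not recoverable from the coprime-action machinery you describe.
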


The conclusion for right Engel sinks is stronger. Note that while it is well-known that the inverse of a right Engel element is left Engel, there is no such a straightforward connection between left and right Engel sinks, and Theorem~\ref{t-mainr} is not a consequence of Theorem~\ref{t-main}.

\begin{theorem}\label{t-mainr}
Let $G$ be a finite group admitting an automorphism $\f$ of prime order coprime to $|G|$. Let $m$ be a positive integer such that every element $g\in C_G(\f)$ has a right Engel sink ${\mathscr R}(g)$ of cardinality at most $m$. Then $G$ has a nilpotent normal subgroup of index bounded in terms of $m$ only.
\end{theorem}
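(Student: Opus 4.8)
The plan is to combine the "non-automorphism" result Theorem~\ref{t-fr} with the Hartley–Isaacs / Khukhro machinery for coprime automorphisms of prime order. First I would reduce to a convenient situation: since $\f$ has prime order $p$ coprime to $|G|$, by a standard argument it suffices to bound the index of a nilpotent normal subgroup after passing to quotients and extensions controlled by $m$. I would begin by analysing the Fitting quotient: let $\bar G = G/F(G)$ and observe that $C_{\bar G}(\f)$ is covered by the image of $C_G(\f)$ (coprime action), and that right Engel sinks behave reasonably under taking quotients. The aim of this first stage is to show that $\bar G$, equipped with the induced action of $\f$, again satisfies the hypothesis with the same $m$, so that by induction on $|G|$ we may assume $F(G)=1$ unless the theorem already holds; in the Fitting-free case $C_G(\f)$ acts faithfully on a product of $\f$-invariant components/abelian chief factors, and we must derive a contradiction or a bound from the sink condition.

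The key structural input is that a right Engel sink of an element $g$ "sees" the action of $g$ on sections of $G$: concretely, if $g\in C_G(\f)$ acts on an $\f$-invariant section $V$ (an elementary abelian chief factor), then the commutators $[v,{}_r g]$ all lie in $\mathscr R(g)$ for large $r$, so $g$ acts on $V$ with the eventual image $[V,{}_r g]$ of size at most $m$; hence $g$ acts as a unipotent-plus-bounded-rank transformation, and in particular $g$ centralizes a subgroup of bounded corank on $V$. Since $\f$ is fixed-point-free on $C_V(C_G(\f))$-complement-type pieces, one invokes the Khukhro–Makarenko–Shumyatsky style results: if $A=C_G(\f)$ has the property that every element has small right Engel sink, then $A$ is (close to) nilpotent by Theorem~\ref{t-fr}, and the Hartley–Turau / Khukhro theorem on finite groups with almost fixed-point-free automorphism of prime order forces $G$ itself to be close to nilpotent. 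More precisely, I would show $C_G(\f)$ has a nilpotent normal subgroup of $m$-bounded index by Theorem~\ref{t-fr}, and then quote (or reprove via coprime action and the three-subgroup lemma) that $F(G)$ has index bounded in terms of $|C_G(\f):F(C_G(\f))|$ and $p$ — but since $p$ is not bounded, this last step is exactly where care is needed.

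The main obstacle, therefore, is eliminating the dependence on the prime $p$: a priori the bound for $|G:F(G)|$ coming from almost fixed-point-free automorphism theorems involves both the number of fixed points and the order of the automorphism. To handle this I would argue that the right Engel sink condition on \emph{every} element of $C_G(\f)$, not just on the order of $C_G(\f)$, gives uniform control: any $g\in C_G(\f)$ of order prime to $p$ acting nontrivially and without small fixed-point-section contradicts $|\mathscr R(g)|\le m$, and a Clifford-theoretic / representation argument over $\f$-invariant modules shows the "bad" configuration would require $p$-independent many conjugates of a bounded-size set, bounding $p$ itself or the relevant section. Thus the strategy is: (1) apply Theorem~\ref{t-fr} inside $C_G(\f)$; (2) pass to $G/F_2(G)$ or $G/F(G)$ and use coprime action to transfer the sink hypothesis; (3) in the reduced case, use the sink bound on individual elements of $C_G(\f)$ together with Clifford theory relative to $\langle\f\rangle$ to force a contradiction unless $|G:F(G)|$ is $m$-bounded; and (4) conclude by assembling the bounds, using that the nilpotency conclusion (as opposed to merely metanilpotent, as in Theorem~\ref{t-main}) comes from the stronger grip that right Engel sinks give on the hypercentre via Baer's theorem. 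I expect step (3), the representation-theoretic removal of the $p$-dependence, to be the technical heart of the argument.
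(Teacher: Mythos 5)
There is a genuine gap: the route you propose does not contain, and cannot substitute for, the quantitative mechanism that actually drives the proof. Your central plan is to apply Theorem~\ref{t-fr} inside $C_G(\f)$ and then invoke a Hartley--Meixner/Turau type theorem. But those ``almost fixed-point-free'' results (Theorem~\ref{t-h-m}) require a bound on the \emph{order} of the fixed-point subgroup in the relevant quotient, whereas Theorem~\ref{t-fr} only tells you that $C_G(\f)$ is (bounded)-by-nilpotent; no theorem bounds $|G:F(G)|$ in terms of $|C_G(\f):F(C_G(\f))|$ and $p$, and even if one did, the resulting bound would involve $p$, and your step (3) for eliminating $p$ is left as a hope (``I expect step (3)\dots to be the technical heart'') with no actual argument. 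In fact the paper never uses Theorem~\ref{t-fr} or Theorem~\ref{t-h-m} in the right Engel case. There is also an error in your ``key structural input'': for $g\in C_G(\f)$ acting on a section $V$, the commutators $[v,\,{}_r g]$ are \emph{left} Engel commutators relative to $g$ and eventually lie in ${\mathscr E}(g)$, not in ${\mathscr R}(g)$; the identification of $[V,g]$ with the right sink is valid only in the coprime abelian situation of Lemma~\ref{l-sink}, and, more importantly, the decisive use of the hypothesis in the actual proof is through right sinks of fixed points $c\in C_F(\f)$ \emph{inside} the Fitting subgroup (the module elements), not through sinks of the acting elements.

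Concretely, what is missing is the following chain, none of which appears in your proposal. After reducing to $G$ soluble, $G=F_2(G)$, and $F=F(G)$ a product of minimal normal $\f$-invariant factors $V_1\times\dots\times V_k$, one first bounds $|C_{G/F}(\f)|$ by $m$ using Lemma~\ref{l-sink} (the coprime action of a fixed $q$-element on the Hall $q'$-part of $F$ has image inside its right sink) together with Lemma~\ref{l33}; an induction on $|C_{G/F}(\f)|$ then reduces to $C_{G/F}(\f)=1$, so that $(G/F)\langle\f\rangle$ is a Frobenius group and, by Lemma~\ref{l-frob}, each $V_i$ is the normal closure of $C_{V_i}(\f)$ (this is what guarantees nontrivial fixed points in $F$ to which the sink hypothesis applies). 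The $p$-independence you worry about is then obtained not by Clifford theory but by the characteristic-$q$ identity $[c,\,{}_{q^n}x]=[c,x^{q^n}]$ for $c\in C_{V_i}(\f)$: since $G/C_G(V_i)$ is a $q'$-group, the $q^n$-th powers exhaust it, so $|{\mathscr R}(c)|\leq m$ forces $|G/C_G(V_i):C_{G/C_G(V_i)}(c)|\leq m$ and hence $|G/C_G(V_i)|$ is $m$-bounded (Lemma~\ref{l-mainr}); finally, a chain argument choosing fixed points $c_i\in C_{V_i}(\f)$ and elements $a_i\in C_G(V_1\cdots V_{i-1})$ with $[c_i,\,{}_n a_i]\neq 1$ shows, by considering the single product $c_1c_2\cdots c_{m+1}$ whose right sink would exceed $m$, that at most $m$ factors suffice to cut the centralizers down to $F$, giving the $m$-bound on $|G/F|$. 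Without these ingredients (bounding $C_{G/F}(\f)$, the Frobenius closure property, the $q$-power trick, and the product-of-fixed-points counting), your outline does not yield the theorem.
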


 Recall that the Fitting series starts with the Fitting subgroup $F_1(G)=F(G)$, and by induction, $F_{k+1}(G)$ is the inverse image of $F(G/F_k(G))$.
 Throughout the paper we shall write, say, ``$(a,b,\dots )$-bounded'' to abbreviate ``bounded above in terms of $a, b,\dots $ only''. Thus, the conclusion of Theorem~\ref{t-main} states that the index of $F_2(G)$ is $m$-bounded, and the conclusion of Theorem~\ref{t-mainr} states that the index of $F(G)$ is $m$-bounded.

 The results of Theorems~\ref{t-main} and \ref{t-mainr} are in a sense best-possible. There are easy examples showing that  in Theorem~\ref{t-main} one cannot obtain a bound in terms of $m$ (or even in terms of $|\f |$ and $m$) for the index of $F(G)$, nor can one prove that there is a normal subgroup of $(|\f|,m)$-bounded order with metanilpotent quotient.  In Theorem~\ref{t-main} one cannot prove that there is a normal subgroup of $(|\f |,m)$-bounded order with nilpotent quotient.

 We do not know whether the results of Theorems~\ref{t-main} and \ref{t-mainr} can be extended to an automorphism of prime order not coprime to $|G|$. Another possible direction of further studies would be considering automorphisms of composite order.

In \S\,\ref{s-red} we perform reduction of the  proofs  of both Theorems~\ref{t-main} and~\ref{t-mainr} to the case of soluble groups, using the classification of finite simple groups. Then in  \S\,\ref{s-l} the proof  of Theorem~\ref{t-main} about left Engel sinks is completed: first a `weak' upper bound for the index of $F_2(G)$  in terms of  $|\f|$ and $m$ is obtained, and then a `strong' upper bound, in which the dependence on $|\f|$ is eliminated. The proof of Theorem~\ref{t-mainr}  is completed in \S\,\ref{s-r}.
Requisite preliminary material is collected in~\S\,\ref{s-p}.

\section{Preliminaries}\label{s-p}

For a group $A$ acting by automorphisms on a group $B$ we use the usual notation for commutators $[b,a]=b^{-1}b^a$ and commutator subgroups $[B,A]=\langle [b,a]\mid b\in B,\;a\in A\rangle$, as well as for centralizers $C_B(A)=\{b\in B\mid b^a=b \text{ for all }a\in A\}$.
We usually denote by the same letter $\alpha$ the automorphism induced by an automorphism $\alpha $ on the quotient by a normal $\alpha$-invariant subgroup.

We say for short that an automorphism $\f$ of a finite group $G$ is a coprime automorphism if  the orders of $\f$ and $G$ are coprime: $(|G|,|\f|)=1$.  First we recall some properties of coprime automorphisms of finite groups. Several well-known facts about coprime automorphisms  will be often used without special references, but we recall them in the following lemma.

\begin{lemma}\label{l-nakr}
Let $\f$ be a coprime automorphism of a finite group $G$.
\begin{itemize}
  \item[\rm (a)] For every prime $p$ dividing $|G|$, there is a $\f$-invariant Sylow $p$-subgroup of $G$.
  \item[\rm (b)] If $G$ is soluble, then for every set $\pi$ of primes dividing $|G|$, there is a $\f$-invariant Hall $\pi$-subgroup of $G$.
  \item[\rm (c)] If $N$ is a normal $\f$-invariant subgroup of~$G$, then the fixed points of $\f$ in  the quotient $G/N$ are covered by the fixed points of $\f$ in $G$, that is, $C_{G/N}(\f ) = C_G(\f)N/N$.
\end{itemize}
  \end{lemma}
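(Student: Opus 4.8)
All three assertions are standard facts about coprime actions; I indicate how I would argue them. Put $H=G\langle\f\rangle$ for the natural semidirect product, so that $\langle\f\rangle$ is a complement to the normal subgroup $G$ and $\gcd(|G|,|\langle\f\rangle|)=1$.

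\emph{Parts (a) and (b).} For (a), pick any Sylow $p$-subgroup $P$ of $G$; since $p$ does not divide $|\f|$, it is also a Sylow $p$-subgroup of $H$, so the Frattini argument gives $H=G\,N_H(P)$. Therefore $|N_H(P):N_G(P)|=|\f|$ with $N_G(P)=N_H(P)\cap G$ a normal Hall subgroup of $N_H(P)$, and the Schur--Zassenhaus theorem yields a complement $C$ to $N_G(P)$ in $N_H(P)$ of order $|\f|$. As $C$ is then also a complement to $G$ in $H$, a second application of Schur--Zassenhaus (valid since $\langle\f\rangle$ is cyclic, hence soluble) shows $C=\langle\f\rangle^{x}$ for some $x\in G$; conjugating back, $\langle\f\rangle$ normalizes the Sylow $p$-subgroup $P^{x^{-1}}$ of $G$. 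Part (b) follows by the same argument with ``Sylow $p$-subgroup'' replaced throughout by ``Hall $\pi$-subgroup'': here $H$ is soluble (an extension of the soluble group $G$ by the cyclic group $\langle\f\rangle$), a Hall $\pi$-subgroup of $G$ is a Hall $\pi$-subgroup of $H$ because $|\f|$ is coprime to $|G|$, and one invokes Philip Hall's existence and conjugacy theorems in place of Sylow's.

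\emph{Part (c).} The inclusion $C_G(\f)N/N\le C_{G/N}(\f)$ is clear, so it suffices to prove that every $\f$-invariant coset $gN$ contains a fixed point of $\f$. First replace $G$ by the $\f$-invariant subgroup $\langle g\rangle N$; this reduces us to the case in which $G/N$ is cyclic, generated by $gN$, with $\f$ acting trivially on $G/N$. Then $[x,\f]\in N$ for every $x\in G$, which shows on the one hand that $N\langle\f\rangle$ is normal in $H$, and on the other hand that $\f^{g}$ differs from $\f$ by an element of $N$, hence lies in $N\langle\f\rangle$. Thus $\langle\f\rangle$ and $\langle\f\rangle^{g}$ are two complements to $N$ in the subgroup $N\langle\f\rangle$, whose orders $|\f|$ and $|N|$ are coprime; by the conjugacy part of the Schur--Zassenhaus theorem there is $y\in N$ with $\langle\f\rangle^{gy}=\langle\f\rangle$. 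Hence $gy\in G$ normalizes $\langle\f\rangle$, so $(gy)^{-1}\f(gy)=\f^{t}$ for some $t$; but this element also lies in the coset $G\f$ of $G$ in $H$, which forces $\f^{t}=\f$, i.e.\ $gy\in C_G(\f)$. Since $gy\in gN$, we are done. The only slightly delicate point is verifying, in the last step, that the normalizing element $gy$ genuinely centralizes $\f$ rather than merely normalizing $\langle\f\rangle$; everything else is routine coprime-action bookkeeping.
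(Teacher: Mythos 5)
Your proof is correct. The paper itself gives no proof of this lemma, stating it as a collection of well-known facts about coprime actions to be used without special reference; your arguments are exactly the standard ones found in the textbook treatments: the Frattini argument plus both parts of Schur--Zassenhaus for (a), the same scheme with Hall's existence and conjugacy theorems for soluble groups in (b) (where the hypothesis that $\pi$ consists of primes dividing $|G|$ is what makes a Hall $\pi$-subgroup of $G$ a Hall $\pi$-subgroup of $G\langle\f\rangle$, as you use), and conjugacy of complements to $N$ in $N\langle\f\rangle$ for (c), with the final coset observation correctly upgrading ``normalizes $\langle\f\rangle$'' to ``centralizes $\f$''.
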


In the next useful lemma the condition that the group is nilpotent cannot be dropped.

 \begin{lemma}[{\cite[Lemma~2.4]{rod-shu}}]\label{l-copr}
Let $\varphi$ be a coprime automorphism of a finite nilpotent
group $G$. Then any element $g\in G$ can be uniquely written in the form
$g=cu$, where $c\in C_G(\varphi)$ and $u=[v,\varphi ]$ for some $v\in G$.
 \end{lemma}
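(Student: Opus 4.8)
The plan is to establish the \emph{existence} of such a decomposition by induction on the nilpotency class of $G$, and then to obtain \emph{uniqueness} essentially for free from a counting argument based on the observation that the set of single commutators $\{[v,\f]\mid v\in G\}$ has cardinality exactly $[G:C_G(\f)]$.

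For existence, the base case is $G$ abelian. Then the map $v\mapsto[v,\f]$ is a homomorphism, so $[G,\f]=\{[v,\f]\mid v\in G\}$ is a subgroup and $\f$ acts trivially on $G/[G,\f]$; hence by Lemma~\ref{l-nakr}(c) we get $G/[G,\f]=C_{G/[G,\f]}(\f)=C_G(\f)[G,\f]/[G,\f]$, so that $G=C_G(\f)[G,\f]$ and every $g\in G$ has the required form. For the inductive step, assume $G$ is nonabelian and set $Z=Z(G)$, a nontrivial $\f$-invariant central, hence abelian, subgroup. Applying the inductive hypothesis to $G/Z$ (which has smaller class) and lifting fixed points via Lemma~\ref{l-nakr}(c), we may write $g=c\,[v,\f]\,z$ with $c\in C_G(\f)$, $v\in G$, $z\in Z$. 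Now apply the already established abelian case to $Z$: write $z=c_1[w,\f]$ with $c_1\in C_Z(\f)\subseteq C_G(\f)$ and $w\in Z$. Since $c_1$ and $[w,\f]$ lie in $Z$ and so are central, the identity $[wv,\f]=[w,\f]^v[v,\f]$ yields $[v,\f][w,\f]=[w,\f][v,\f]=[wv,\f]$, whence $g=cc_1\,[wv,\f]$ with $cc_1\in C_G(\f)$, as wanted.

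For uniqueness, a direct computation shows that $[v,\f]=[v',\f]$ if and only if $v'v^{-1}\in C_G(\f)$, so the fibres of the map $v\mapsto[v,\f]$ are exactly the right cosets of $C_G(\f)$; hence $|\{[v,\f]\mid v\in G\}|=[G:C_G(\f)]$, an identity needing neither coprimality nor nilpotency. Consequently the set $C_G(\f)\times\{[v,\f]\mid v\in G\}$ has cardinality $|C_G(\f)|\cdot[G:C_G(\f)]=|G|$, and the map $(c,u)\mapsto cu$ from it onto $G$, being a surjection between finite sets of equal size, is a bijection --- which is precisely the uniqueness statement. The heart of the argument is the inductive step for existence: nilpotency enters only through the requirement that $Z(G)\neq1$, so that the induction on class can begin (consistently with the remark that nilpotency cannot be dropped), and the single delicate point is to arrange the commutator bookkeeping with the central subgroup $Z$ so that a product of two single commutators is again a single commutator.
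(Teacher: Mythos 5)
Your proof is correct: existence by induction on the nilpotency class (using the abelian case for $Z(G)$ and the identity $[wv,\f]=[w,\f]^v[v,\f]$ to absorb the central correction into a single commutator), followed by uniqueness via the counting observation that the fibres of $v\mapsto[v,\f]$ are exactly the cosets of $C_G(\f)$, is a complete and valid argument. Note that the paper itself gives no proof of this lemma---it is quoted from \cite[Lemma~2.4]{rod-shu}---so there is nothing in-paper to compare against; your argument is essentially the standard self-contained proof of this fact, with coprimality entering only through the covering of fixed points in quotients and nilpotency only through $Z(G)\neq 1$.
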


As a special case, we have the following well-known fact.

\begin{lemma}\label{l-ab}
Let $\f$ be a coprime automorphism of an abelian finite group $G$. Then
$G=[G,\f]\times C_G(\f)$.
 \end{lemma}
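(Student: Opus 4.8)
The plan is to deduce this at once from Lemma~\ref{l-copr}. Since an abelian group is in particular nilpotent, Lemma~\ref{l-copr} applies to $G$: every $g\in G$ can be written as $g=cu$ with $c\in C_G(\f)$ and $u=[v,\f]$ for some $v\in G$. The existence part of that statement immediately gives $G=C_G(\f)[G,\f]$, and since $G$ is abelian both $C_G(\f)$ and $[G,\f]$ are normal, so it only remains to check that this product is \emph{direct}, i.e.\ that $C_G(\f)\cap[G,\f]=1$.

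For the intersection I would again use the uniqueness clause of Lemma~\ref{l-copr}. Suppose $g\in C_G(\f)\cap[G,\f]$, say $g=[v,\f]$. Then $g$ admits the two decompositions $g = g\cdot[1,\f]$ and $g = 1\cdot[v,\f]$, both of the form permitted in Lemma~\ref{l-copr} (with $g,1\in C_G(\f)$ and $[1,\f],[v,\f]\in[G,\f]$); by the asserted uniqueness, $g=1$. Hence $C_G(\f)\cap[G,\f]=1$ and $G=[G,\f]\times C_G(\f)$.

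Alternatively, one can give a self-contained argument via the standard coprime averaging trick, which is perhaps more transparent in the abelian case. Writing $n=|\f|$, the hypothesis $(|G|,n)=1$ makes the map $x\mapsto x^n$ an automorphism of $G$, so one may form the endomorphism $\pi(g)=\bigl(\prod_{i=0}^{n-1}g^{\f^i}\bigr)^{1/n}$, where $1/n$ denotes the inverse of $n$ modulo the exponent of $G$. A short computation shows that $\pi$ is $\f$-equivariant with $\pi\f=\pi$, that $\pi$ restricts to the identity on $C_G(\f)$ (so $\pi^2=\pi$ and $\operatorname{im}\pi=C_G(\f)$), and that $\ker\pi=[G,\f]$ (the inclusion $[G,\f]\subseteq\ker\pi$ is a telescoping identity, and $g=g\cdot\pi(g)^{-1}\pi(g)$ with $g\pi(g)^{-1}\in[G,\f]$ gives the reverse). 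An idempotent endomorphism splits $G$ as the internal direct product of its image and kernel, yielding $G=[G,\f]\times C_G(\f)$.

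There is no genuine obstacle here: the only subtlety worth a word is that the assertion is about a \emph{direct} decomposition rather than merely $G=C_G(\f)[G,\f]$, and triviality of the intersection is exactly what the uniqueness in Lemma~\ref{l-copr} (equivalently, the idempotence of $\pi$) provides.
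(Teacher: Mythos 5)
Your first argument is precisely the paper's route: the lemma is stated there as an immediate special case of Lemma~\ref{l-copr}, and your deduction via existence plus uniqueness is the intended one (the only point you leave tacit is that in an abelian group the map $v\mapsto[v,\f]$ is an endomorphism, so every element of $[G,\f]$ is indeed a single commutator $[v,\f]$, which legitimises the second decomposition). The averaging/idempotent-endomorphism argument is a correct self-contained alternative but is not needed.
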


The following lemma appeared in \cite{khu-shu16}  in a slightly less general form, so we reproduce the proof for the benefit of the reader.

\begin{lemma}\label{l33}
Let $V$ be an abelian  finite group, and $U$ a group of coprime
automorphisms of $V$.  If $|[V,u]|\leq m$ for every $u\in U$, then $|[V,U]|$ is $m$-bounded, and therefore $|U|$ is also $m$-bounded.
\end{lemma}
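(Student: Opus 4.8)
The plan is to bound $|[V,U]|$ first and then deduce a bound on $|U|$ from the faithfulness of the action on $[V,U]$. First I would reduce to the case where $U$ acts faithfully on $V$: the kernel of the action is irrelevant to all the hypotheses and conclusions, so we may assume $U\leq \mathrm{Aut}(V)$. Next I would pass to a single element: pick $u_1\in U$ with $[V,u_1]$ of maximal order among all $[V,u]$, $u\in U$. If $[V,U]=[V,u_1]$ we are done with the first claim, since then $|[V,U]|\leq m$. Otherwise there is $u_2\in U$ with $[V,u_2]\not\leq [V,u_1]$, and I would show that $W:=[V,u_1]\cdot[V,u_2]$ is properly larger than $[V,u_1]$ while still having $U$-bounded order; the point is that $W/[V,u_1]$ is a nontrivial image of $[V,u_2]$, hence of order at most $m$. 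Iterating, each step multiplies the order of the subgroup accumulated so far by at most $m$, so after at most $k$ steps the subgroup stabilises and equals $[V,U]$; the subtle part is to bound the number of steps $k$.

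To bound $k$, the natural idea is this. Since $[V,u_1]$ has order at most $m$, and $V=[V,u_1]\times C$ for some complement is too much to ask in general, I would instead work modulo $[V,U]$-relevant data: after $j$ steps we have a subgroup $W_j$ with $[V,U]\supseteq W_j\supsetneq W_{j-1}$, and $|W_j|\leq m^j$. The process terminates when $W_j$ absorbs all $[V,u]$, i.e. when $W_j=[V,U]$. To see that it terminates in $m$-boundedly many steps, observe that $U$ acts on $W_j$ (as each $[V,u]$ is $U$-invariant once... actually $[V,u]$ need not be $U$-invariant), so more carefully: let $N_j=\bigcap_{u\in U}C_V(u)$-type object is not what we want either. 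The clean approach is to note $[V,U]=\sum_{u\in U}[V,u]$ as a sum of subgroups, and a sum of subgroups each of order $\leq m$ which strictly grows at each stage must stabilise after at most $\log_2 m \cdot (\text{something})$...

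Let me restructure. The key inequality should be: if $|[V,u]|\leq m$ for all $u$, then $[V,U]$ has exponent dividing $m!$ and each of its primary components has rank $m$-bounded, so $|[V,U]|$ is $m$-bounded provided we also bound the number of primary components — but there is no such bound a priori unless we use the boundedness of the ranks together with coprimality. The honest route, and the one I expect the paper takes: since each $[V,u]$ has order $\leq m$, the subgroup $[V,U]$ is generated by subgroups of order $\leq m$ that are permuted by $U$; choosing $u_1,\dots,u_k$ as above with $W_k=[V,U]$, we get $|[V,U]|\leq m^k$, and $k$ is bounded because $U$ acts faithfully and coprimely on $W_k$ with each $[W_k,u_i]$ nontrivial, forcing $|U|\leq |\mathrm{Aut}(W_k)|$ to be $m^k$-bounded, which then via a separate counting argument (the number of distinct subgroups $[V,u]$ is at most $|U|$) closes the loop. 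The main obstacle is precisely making this mutual bootstrap between the bound on $k$ (equivalently on $|[V,U]|$) and the bound on $|U|$ rigorous without circularity; I would resolve it by first bounding $k$ intrinsically — e.g. observing that the chain $W_1\subsetneq W_2\subsetneq\cdots$ of $U$-bounded-order groups, with each quotient $W_{j}/W_{j-1}$ embedding in a group of order $\leq m$, cannot have length exceeding the number of $U$-orbits needed, and that this is controlled purely by $m$ via the structure of $[V,U]$ as an $\F_p$-module summed over the primes $p\mid m$. Once $|[V,U]|\leq f(m)$ is established, $U$ embeds in $\mathrm{Aut}([V,U])$ because $U$ acts faithfully and $[V,C_U(\text{action on }[V,U])]=1$, so $|U|$ is $m$-bounded as claimed.
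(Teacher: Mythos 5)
Your sketch stalls exactly at the step you yourself flag as subtle, and that step is the whole content of the lemma: nothing in your proposal actually bounds the length $k$ of the chain $W_1\subsetneq W_2\subsetneq\cdots$. The devices you float there (counting ``$U$-orbits needed'', ``the structure of $[V,U]$ as a module summed over the primes dividing $m$'', a bootstrap between $k$ and $|U|$) are gestures, not arguments, and you concede the circularity yourself. The underlying difficulty is that the hypothesis controls only $[V,u]$ for \emph{single} elements $u$, and with your choices ($[V,u_1]$ of maximal order, then any $u_2$ with $[V,u_2]\not\leq[V,u_1]$) there is no way to convert the growth of $W_j$ into a violation of that hypothesis: $u_2$ may act nontrivially on $[V,u_1]$, and the commutator of a product of your chosen elements need not contain the pieces already accumulated.

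The paper closes precisely this gap using coprimality and commutativity in a way your outline never touches. It first treats abelian $U$: by Lemma~\ref{l-ab}, $V=[V,u_1]\times C_V(u_1)$ with both factors $U$-invariant (this is where abelianness enters), and each new element is chosen in the centralizer of the accumulated part, $u_{i+1}\in C_U([V,u_1]\times\cdots\times[V,u_i])$; then the \emph{single} element $w=u_1u_2\cdots u_k$ satisfies $[V,w]=[V,u_1]\times\cdots\times[V,u_k]$, so $2^k\leq |[V,w]|\leq m$ and the process must stop after an $m$-bounded number of steps, at which point $C_U([V,u_1]\times\cdots\times[V,u_k])=1$ and $U$ embeds in the automorphism group of an $m$-bounded group. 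The general case is reduced to this one: each $u$ acts faithfully on $[V,u]$ (coprimality), so $U$ has $m$-bounded exponent and hence $m$-boundedly many prime divisors; for a Sylow $p$-subgroup $P$ one takes a maximal normal abelian subgroup $M$, bounds $|M|$ via the abelian case and faithfulness of $M$ on $[V,M]$, and uses $C_P(M)=M$ to bound $|P|$. Only after $|U|$ is bounded does the paper bound $|[V,U]|=\prod_{u\in U}[V,u]$ --- the reverse of the order you propose. Note finally that your closing step (``$U$ embeds in $\mathrm{Aut}([V,U])$'') also silently needs coprimality: without it faithfulness of $U$ on $[V,U]$ can fail, e.g.\ for $V$ elementary abelian of order $2^n$ and the automorphisms fixing $e_1$ and sending $e_i\mapsto e_i+a_ie_1$, every $[V,u]$ has order at most $2$ and $[V,U]=\langle e_1\rangle$, yet $|U|=2^{n-1}$; with coprimality it follows from $V=[V,u]\times C_V(u)$. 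As written, then, the proposal has a genuine gap at its central step.
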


\begin{proof}
First suppose that $U$ is abelian. Pick $u_1\in U$ such that $[V,u_1]\ne 0$. By Lemma~\ref{l-ab}, $V= [V,u_1]\times C_V(u_1)$, and both summands are $U$-invariant, since $U$ is abelian. If $C_U([V,u_1])=1$, then $|U|$ is $m$-bounded and $[V,U]$ has $m$-bounded order being generated by $[V,u]$, $u\in U$. Otherwise pick $1\ne u_2\in C_U([V,u_1])$; then $V= [V,u_1] \times [V,u_2] \times C_V(\langle u_1,u_2\rangle )$. If $1\ne u_3\in C_U([V,u_1]\times [V,u_2])$, then $V= [V,u_1]\times [V,u_2]\times [V,u_3] \times C_V(\langle u_1,u_2,u_3\rangle )$, and so on. If $C_U([V,u_1]\times\dots \times [V,u_k])=1$ at some $m$-bounded step $k$, then again $[V,U]$ has $m$-bounded order. However, if there are too many steps, then
for the element $w=u_1u_2\cdots u_k$ we shall have $0\ne [V,u_i]= [[V,u_i],w]$, so that $[V,w] = [V,u_1]\times\dots \times [V,u_k]$ will have order greater than $m$, a contradiction.

We now consider the general case. Since every element $u\in U$ acts faithfully on $[V,u]$, the exponent of $U$ is $m$-bounded. If $P$ is a Sylow $p$-subgroup of $U$, let $M$ be a maximal normal abelian subgroup of $P$. By the above, $|[V,M]|$ is $m$-bounded. Since $M$ acts faithfully on $[V,M]$, we obtain that $|M|$ is $m$-bounded. Hence $|P|$ is $m$-bounded, since $C_P(M)= M$ and $P/M$ embeds in the automorphism group of $M$.  Since $|U|$ has only $m$-boundedly many prime divisors, it follows that $|U|$ is $m$-bounded. Since $[V,U]=\prod_{u\in U}[V,u]$, we obtain that $|[V,U]|$ is also $m$-bounded.
\end{proof}

Along with the aforementioned celebrated theorem of Thompson on the nilpotency of finite groups with a fixed-point-free automorphism of prime order, we shall also use the following theorem of Hartley and Meixner about   an `almost fixed-point-free' automorphism.

\begin{theorem}[{\cite{h-m}}]\label{t-h-m}
  If a finite soluble group $G$ admits an automorphism $\f$ of prime order $p$ such that $|C_G(\f)|=s$, then the index of $F(G)$ is $(p,s)$-bounded.
\end{theorem}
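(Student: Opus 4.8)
This is a classical result of Hartley and Meixner, and here is the approach I would take. It suffices to treat the coprime case $(|G|,p)=1$: if $p\mid|G|$ one passes to $G/O_p(G)\le G/F(G)$ and analyses a $\f$-invariant Sylow $p$-subgroup by Hall--Higman-type arguments (for instance, if $\f$ centralizes $G/O_p(G)$ then $G=C_G(\f)[G,\f]$ with $[G,\f]\le O_p(G)\le F(G)$, so $[G:F(G)]\le s$ at once; otherwise one has to keep track of the $p$-length), and in any case the coprime case is the one used in the present paper, so I describe the latter. Assume then $(|G|,p)=1$ and $|C_G(\f)|=s$.

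First I would bound the Fitting height $h=h(G)$ in terms of $(p,s)$ --- this is Kurzweil's theorem, obtained from Thompson's theorem ($C_G(\f)=1\Rightarrow h=1$) by a short induction --- and then argue by induction on $h$ that $[G:F(G)]$ is $(p,s)$-bounded. The case $h\le1$ is trivial. If $h\ge2$, put $\bar G=G/F(G)$; then $h(\bar G)=h-1$ and, by Lemma~\ref{l-nakr}(c), $|C_{\bar G}(\f)|\le s$, so by induction $[\bar G:F(\bar G)]$ is bounded; since $F(\bar G)=F_2(G)/F_1(G)$ and $F(F_2(G))=F_1(G)$, it remains to bound $[F_2(G):F(G)]$, i.e. we may replace $G$ by $F_2(G)$ and assume $G/F(G)$ is nilpotent. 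Passing now to $G/\Phi(G)$ (which changes neither $[G:F(G)]$ nor the nilpotency of $G/F(G)$), the Fitting subgroup becomes a direct product $V_1\times\dots\times V_k$ of minimal normal subgroups over prime fields, complemented in $G$ by Gaschütz's theorem; by coprimality the complement $H\cong G/F(G)$ may be chosen $\f$-invariant. Thus we are reduced to the configuration $G=V\rtimes H$ with $H$ nilpotent acting faithfully on $V=\bigoplus_iV_i$, with $\f$ acting on $G$, $|C_H(\f)|\le s$ and $|C_V(\f)|\le s$, and we must bound $|H|$.

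This last step --- a statement purely about a nilpotent group acting faithfully on a module equipped with an almost fixed-point-free coprime operator --- is the heart of the matter. I would attack it by Clifford theory: each Sylow subgroup of $H$ acts faithfully on $V$, and by peeling off imprimitivity and passing to homogeneous, then absolutely irreducible, components one reduces to a primitive nilpotent linear group, whose structure over $\F_q$ is classical --- essentially a central product of a cyclic group with groups of symplectic type inside a $\Gamma L$ twisted by a group acting on the associated symplectic space, so that $|H|$ is controlled by $\dim V$ and by the order of a Singer-like cyclic subgroup. Since $\f$ normalizes $H$ and is compatible with the action, on an irreducible constituent $V=\F_{q^n}$ it acts as $x\mapsto\beta x^{q^j}$ and on the Singer torus $\langle\alpha\rangle$ as the $q^j$-power map; having order $p$ forces $\gcd(n,j)=n/p$, and then the hypotheses $|C_V(\f)|\le s$ and $|C_H(\f)|\le s$ translate into $q^{n/p}\le s$ (resp. $\gcd(q^j-1,|\alpha|)\le s$), forcing $n$, and hence $|H|$, to be $(p,s)$-bounded, exactly as in this model computation with $H$ cyclic. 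The obstacles I anticipate are: carrying out the reduction to primitive linear groups uniformly while keeping track of how $\f$ permutes the imprimitivity blocks and the symplectic-type factors --- the combinatorics of $\f$-orbits must be bounded by the two centralizer hypotheses together, not separately --- and, secondarily, the non-coprime reduction sketched above. Everything else is routine bookkeeping.
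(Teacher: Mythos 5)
First, note that the paper does not prove this statement at all: Theorem~\ref{t-h-m} is quoted verbatim from Hartley and Meixner \cite{h-m} and used as a black box, so there is no internal proof to compare yours with; your proposal has to stand on its own. Its preparatory reductions are essentially sound: coprime covering of fixed points (Lemma~\ref{l-nakr}(c)), induction on the Fitting height (granting the cited Fitting-height bound, which is a genuine external theorem rather than ``a short induction'' from Thompson), passage to $G/\Phi(G)$, and the existence of a $\f$-invariant complement to $F$ (which needs the standard counting argument: the number of complements is a $\pi(V)$-number, hence coprime to $p$, so $\f$ fixes one). This correctly reduces the coprime case to: $H$ nilpotent acting faithfully and completely reducibly on $V$, with $|C_V(\f)|\leq s$ and $|C_H(\f)|\leq s$, bound $|H|$.

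The gap is that this last statement, which you yourself call the heart of the matter, is not proved but only programmed, and the points you defer as ``obstacles'' are exactly where the theorem lives. Concretely: (i) Clifford theory and the structure of primitive nilpotent linear groups at best bound the image of $H$ on each $\f$-invariant homogeneous component (via your $\Gamma L_1$-type computation) and on each $\f$-orbit of length $p$ (where the diagonal fixed points force the block size to be at most $s$); but $H$ embeds only in the product of these images over \emph{all} components, and the number of components is not bounded, so a bound on each image does not bound $|H|$. To finish one must show that boundedly many components already have trivial joint centralizer, or that additional components force additional fixed points; fixed points on distinct $\f$-invariant components do multiply inside $C_V(\f)$, but fixed points arising in the quotients $H/C_H(W_j)$ for different $j$ are not independent inside $C_H(\f)$, so the two hypotheses do not combine ``for free'' --- this endgame is comparable in difficulty to the final argument in the proof of Theorem~\ref{t-mainr} above and is certainly not routine bookkeeping. (ii) The extraspecial/symplectic-type factors and the way $\f$ permutes them are likewise only named, not handled; the cyclic ``model computation'' does not cover them. (iii) Finally, the statement as given assumes no coprimality, and your non-coprime reduction (``keep track of the $p$-length'') is a description of the difficulty, not an argument; this matters for the theorem as stated, even though the paper only applies it with $(|G|,p)=1$. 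As it stands, the proposal is a plausible strategy with the decisive steps missing.
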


We shall use the following lemma on actions of Frobenius groups.

\begin{lemma}[{\cite[Lemma~2.4]{kms}}]\label{l-frob}
Suppose that a finite group $G$ admits a Frobenius group of automorphisms
$FH$ with kernel $F$ and complement $H$ such that $C_G(F)=1$. Then
 $G=\langle C_G(H)^f\mid f\in F\rangle$.
\end{lemma}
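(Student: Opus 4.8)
The plan is to argue by induction on $|G|$. Put $R=\langle C_G(H)^f\mid f\in F\rangle$; then $C_G(H)\le R$ (take $f=1$), and $R$ is $FH$-invariant, being $F$-invariant by construction while $H$ permutes the generating subgroups: $\bigl(C_G(H)^f\bigr)^h=C_G(H)^{f^h}$ with $f^h\in F$ for $h\in H$, since $H$ normalises both $C_G(H)$ and $F$. We must show $R=G$, and we may assume $G\ne1$.

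The first step is a reduction via $FH$-invariant normal subgroups. Let $N\trianglelefteq G$ be $FH$-invariant with $1\ne N<G$. Since $C_N(F)\le C_G(F)=1$, induction applied to $N$ gives $N=\langle C_N(H)^f\mid f\in F\rangle\le R$. Passing to $\overline G=G/N$, in the coprime situation Lemma~\ref{l-nakr}(c) gives $C_{\overline G}(F)=C_G(F)N/N=1$ and $C_{\overline G}(H)=C_G(H)N/N$, whence induction applied to $\overline G$ yields $\overline G=\langle C_{\overline G}(H)^f\mid f\in F\rangle=RN/N$, and therefore $G=RN=R$ because $N\le R$. This reduces us to the case that $G$ has no proper nontrivial $FH$-invariant normal subgroup, so that $G$ is a direct product of isomorphic simple groups transitively permuted by $FH$.

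The nonabelian possibility cannot occur: it would force a nonabelian finite simple direct factor $S$ to admit an appropriate point-stabiliser $T\le F$ as a nontrivial group of coprime automorphisms with $C_S(T)=1$, which is impossible since a finite group admitting a coprime fixed-point-free group of automorphisms is soluble. Hence $G=V$ is an irreducible $\F_p[FH]$-module with $p\nmid|FH|$ and $C_V(F)=0$, and it suffices to prove $C_V(H)\ne0$: then $R$, which contains $C_V(H)$, is a nonzero $FH$-submodule of the irreducible module $V$, so $R=V=G$. To establish $C_V(H)\ne0$, let $\chi$ be the Brauer character of $V$; as $p\nmid|FH|$ it is a genuine class function on $FH$ with $\dim C_V(K)=|K|^{-1}\sum_{k\in K}\chi(k)$ for every subgroup $K\le FH$. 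Using the standard partition $FH=F\cup\bigcup_{f\in F}(H^f\setminus\{1\})$ of the Frobenius group $FH$ together with $\chi(h^f)=\chi(h)$ for conjugate elements, summation of $\chi$ over $FH$ gives
\[
|FH|\dim C_V(FH)=\sum_{f\in F}\chi(f)+|F|\Bigl(\sum_{h\in H}\chi(h)-\dim V\Bigr)=|F|\bigl(|H|\dim C_V(H)-\dim V\bigr),
\]
because $\sum_{f\in F}\chi(f)=|F|\dim C_V(F)=0$ and $\sum_{h\in H}\chi(h)=|H|\dim C_V(H)$. Consequently $\dim C_V(H)=\dim C_V(FH)+\dim V/|H|\ge\dim V/|H|>0$, as wanted.

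The step that needs real care — and which I expect to be the main obstacle — is the passage to $G/N$ in the reduction above. Without a coprimality hypothesis the fixed points of $F$ on $G/N$ need not be covered by those on $G$, so $C_{G/N}(F)=1$ is no longer automatic and Lemma~\ref{l-nakr}(c) is unavailable. Handling this in full generality requires an additional argument to control $C_{G/N}(F)$ — for instance a further reduction, via the $FH$-invariant Fitting subgroup, to the case where $G$ is a $p$-group, followed by a module-theoretic analysis — but the combinatorial core, namely the averaging identity over the Frobenius partition yielding $\dim C_V(H)\ge\dim V/|H|$ whenever $p\nmid|FH|$, remains the same.
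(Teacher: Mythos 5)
Your coprime-case argument is essentially sound: the reduction to a characteristically simple group, and the averaging of the character over the Frobenius partition $FH=F\cup\bigcup_{f\in F}(H^f\setminus\{1\})$ giving $\dim C_V(H)=\dim C_V(FH)+\dim V/|H|>0$ when $C_V(F)=0$, are correct provided the action of $FH$ on $G$ is coprime. But that proviso is nowhere in the statement: the lemma, which this paper does not prove but imports from \cite[Lemma~2.4]{kms}, assumes only that $FH$ is Frobenius and $C_G(F)=1$. Note that $C_G(F)=1$ does not force individual elements of $F$ to act fixed-point-freely, so it does not give $(|G|,|FH|)=1$. Coprimality enters your argument in three essential places: the covering of the fixed points of $F$ and of $H$ in $G/N$ (Lemma~\ref{l-nakr}(c)); the claim that a point stabiliser $T\leq F$ acts on a nonabelian simple factor as a \emph{coprime} fixed-point-free group, which is what lets you quote the solubility theorem for coprime fixed-point-free actions; and the assumption $p\nmid|FH|$ needed both for the fixed-point formula $\dim C_V(K)=|K|^{-1}\sum_{k\in K}\chi(k)$ and for the semisimplicity implicit in the module step. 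You acknowledge only the first of these, and the proposed repair (reduce via the $FH$-invariant Fitting subgroup to a $p$-group and redo a module analysis) is a gesture rather than an argument: in the non-coprime case the quotient step, the insoluble case and the module case all fail at once, and nothing is supplied to replace them.

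This is a genuine gap, because the non-coprime case is the actual content of the cited lemma. In \cite{kms} the conclusion is obtained from $C_G(F)=1$ alone; in particular the insoluble configuration has to be excluded not by Rowley-type coprime solubility but by a result such as the Belyaev--Hartley theorem that a nonabelian finite simple group admits no nontrivial nilpotent group of automorphisms with trivial fixed-point subgroup (recall that $F$, being a Frobenius kernel, is nilpotent), and the passage to quotients must be justified without coprime covering. It is true that in the use made of the lemma in \S\,5 of this paper the relevant actions happen to be coprime (the image of the kernel $G/F$ on each $V_i$ is a $q'$-group by Lemma~\ref{e-ch0}, and $|\langle\varphi\rangle|$ is coprime to $|G|$), so your argument would cover that application; but as a proof of the lemma as stated it is incomplete.
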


We denote by  $\gamma _{\infty}(G)=\bigcap _i\gamma _i(G)$ the intersection of the lower central series of a group~$G$; when $G$ is finite, then $G/\gamma _{\infty}(G)$ is the largest nilpotent quotient of $G$. We note the following elementary fact.

\begin{lemma}\label{l-gf}
If $|\g_\infty(G)|=n$, then the centralizer $C_G(\g_\infty(G))$ is a nilpotent normal subgroup of  $n$-bounded index, so that $|G/F(G)|$ is $n$-bounded.
\end{lemma}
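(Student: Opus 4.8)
The plan is to set $N=\gamma_\infty(G)$ and $C=C_G(N)$ (so $|N|=n$ by hypothesis) and to argue in two steps, the second being the substantive one.

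First I would bound the index $|G/C|$. Since $N$ is normal in $G$, so is $C=C_G(N)$, and $G/C$ embeds into $\mathrm{Aut}(N)$. As $\mathrm{Aut}(N)$ embeds into the symmetric group on $N\setminus\{1\}$, its order is at most $(n-1)!$, so $|G/C|$ is $n$-bounded.

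Second, I would show that $C$ is nilpotent; this is the heart of the matter. The key observation is that the lower central series of a subgroup lies inside that of the ambient group: an immediate induction gives $\gamma_i(C)\le\gamma_i(G)$ for every $i$, whence $\gamma_\infty(C)\le\gamma_\infty(G)=N$. But $C$ centralizes $N$ by definition, so $[\gamma_\infty(C),C]\le[N,C]=1$. Since $G$ is finite, the lower central series of $C$ stabilizes, say $\gamma_\infty(C)=\gamma_k(C)$; then $\gamma_\infty(C)=\gamma_{k+1}(C)=[\gamma_\infty(C),C]=1$, so $C$ is nilpotent.

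Finally, a nilpotent normal subgroup is contained in the Fitting subgroup, so $C\le F(G)$ and hence $|G/F(G)|\le|G/C|$ is $n$-bounded, which yields the last assertion. I do not expect a real obstacle here, as the statement is elementary; the only point that must be handled correctly is the containment $\gamma_\infty(C)\le\gamma_\infty(G)$, after which the fact that $C$ centralizes $N=\gamma_\infty(G)$ forces $\gamma_\infty(C)=1$.
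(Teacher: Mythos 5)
Your proposal is correct and follows essentially the same route as the paper: both arguments show $C_G(\gamma_\infty(G))$ is nilpotent by observing that its lower central series lies inside that of $G$, hence eventually inside $\gamma_\infty(G)$, which the centralizer kills, and both bound the index by embedding $G/C_G(\gamma_\infty(G))$ into ${\rm Aut}(\gamma_\infty(G))$. Your phrasing via stabilization of $\gamma_i(C)$ is just a repackaging of the paper's remark that a long enough commutator $[c_1,\dots,c_k]$ lands in $\gamma_\infty(G)$ and then $[c_1,\dots,c_k,c_{k+1}]=1$.
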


\begin{proof}
  Indeed, for any $c_i\in  C_G(\g_\infty(G))$ a long enough commutator $[c_1,\dots ,c_k]$ belongs to $\g_\infty(G)$ and then $[c_1,\dots ,c_k,c_{k+1}]=1$. Thus, $C_G(\g_\infty(G))\leq F(G)$. The quotient $G/C_G(\g_\infty(G))$ embeds  into the automorphism group of $\g_\infty(G)$ and therefore also has $n$-bounded order. Hence $|G/F(G)|$ is also $n$-bounded.
\end{proof}

We now recall some elementary properties of Engel sinks.  In cases where we need to consider the left or right Engel sink constructed with respect to a subgroup $H$ containing~$g$, we write ${\mathscr E}_H(g)$ or ${\mathscr R}_H(g)$, respectively. The following lemma is a consequence of the properties of coprime actions and a formula in Heineken's paper \cite{hei} on a connection between left and right Engel commutators.

\begin{lemma}[{\cite[Lemma~3.2]{khu-shu19}}]\label{l-sink}
 If $V$ is an abelian subgroup of a finite group $G$, and $g\in G$ an element normalizing $V$  such that $(|V|,|g|)=1$, then
 $$[V,g]={\mathscr E}_{V\langle g\rangle}(g)={\mathscr R}_{V\langle g\rangle}(g).$$
\end{lemma}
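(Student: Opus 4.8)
The statement to prove is Lemma~\ref{l-sink}: for an abelian subgroup $V$ of a finite group $G$ and an element $g$ normalizing $V$ with $(|V|,|g|)=1$, the set $[V,g]$ is simultaneously the smallest left Engel sink and the smallest right Engel sink of $g$ computed inside $H:=V\langle g\rangle$.

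Let me think about how to prove this.

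First, the setup: $V$ is abelian, finite; $g$ normalizes $V$; $(|V|,|g|)=1$. So $\langle g\rangle$ acts by coprime automorphisms on $V$ (well, $g$ might have some power acting trivially, but $\langle g\rangle / C_{\langle g\rangle}(V)$ acts coprimely). Actually let's just say $g$ acts on $V$ and the order of the automorphism it induces is coprime to $|V|$.

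By Lemma~\ref{l-ab} (coprime automorphism of abelian group), $V = [V,g] \times C_V(g)$, where $[V,g]$ and $C_V(g)$ are both $\langle g\rangle$-invariant. Moreover $[[V,g],g] = [V,g]$ — this is a standard fact for coprime action on abelian groups: applying $g-1$ (thinking additively) to $[V,g] = (g-1)V$ gives $(g-1)^2 V$, and by coprimeness $(g-1)$ acts invertibly on $[V,g]$, so $(g-1)[V,g] = [V,g]$.

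Now for the **left Engel sink** $\mathscr{E}_H(g)$. We need: for every $x \in H$, all sufficiently long commutators $[x, {}_l g]$ lie in $[V,g]$, and $[V,g]$ is the smallest such set. Write $x = vg^k$ for $v \in V$, $k \geq 0$. Then $[x,g] = [vg^k, g] = [v,g]^{g^k}[g^k,g] = [v,g]^{g^k} \in [V,g]$ since $[V,g]$ is normal in $H$ (it's $g$-invariant and $V$ is abelian, so it's $V$-invariant too, hence $H$-invariant). Wait, I should double-check $[g^k, g] = 1$ — yes, powers of $g$ commute. So $[x,g] \in [V,g]$ already after one step. Then subsequent commutators stay in $[V,g]$ since it's $H$-invariant: $[[x,g],g] \in [[V,g],g] = [V,g]$. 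So $[V,g]$ is a left Engel sink. It's the smallest: for any $w \in [V,g]$, since $[[V,g],g] = [V,g]$ we can write $w = [w', g]$ for some $w' \in [V,g]$, and iterating, $w = [w'', {}_l g]$ for any $l$ with $w'' \in [V,g] \subseteq H$; so every element of $[V,g]$ is of the form $[x, {}_l g]$ for arbitrarily large $l$, hence must belong to any left Engel sink of $g$. Thus $\mathscr{E}_H(g) = [V,g]$.

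For the **right Engel sink** $\mathscr{R}_H(g)$: we need $[g, {}_r x] \in [V,g]$ for all $x\in H$ and large $r$, and minimality. Here I'd invoke Heineken's formula (as the paper hints): Heineken \cite{hei} gives a relation showing $[g, {}_r x]$ can be expressed via left-Engel-type commutators. Concretely, the key point is that for $x = vg^k$, the commutators $[g, {}_r x]$ eventually land in $[V, \langle x \rangle] \subseteq [V, H]$, and one shows $[V, H] = [V, g]$ — because $[V,H] = [V, V\langle g\rangle] = [V,g]$ since $V$ is abelian so $[V,V]=1$ and then $[V, V\langle g \rangle] = [V,g]^V[V,g]\cdots = [V,g]$ as $[V,g]$ is $V$-invariant. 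Actually more carefully: $[g, {}_r x]$ — first commutator $[g,x] = [g, vg^k] = [g, g^k][g,v]^{g^k}$... hmm, $[g,v] = [v,g]^{-1} \in [V,g]$, so $[g,x] \in [V,g]$, and then continuing to commute with $x$: $[[g,x], x]$ where $[g,x] \in [V,g] \trianglelefteq H$ and $x \in H$, so it stays in $[V,g]$. Same as before! So actually $[g, {}_r x] \in [V,g]$ for all $r \geq 1$. And minimality of $[V,g]$ as a right Engel sink: need every element of $[V,g]$ to appear as $[g, {}_r x]$ for arbitrarily large $r$ — this is where Heineken's formula is genuinely needed, to show $[g, {}_r v]$ ranges over (a set generating, or all of) $[V,g]$; by coprimeness $[g, {}_r v]$ cycles through $[V,v]$-type elements and one can hit every element of $[V,g]$ by choosing $v$ appropriately and $r$ in the right residue class.

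**Main obstacle.** The left-Engel direction is elementary (essentially "$[V,g]$ is $H$-invariant and $g$ acts invertibly on it"). The genuine content — and where I expect the real work — is the **minimality of $[V,g]$ as the right Engel sink**: showing that every single element of $[V,g]$ is forced to lie in $\mathscr{R}_H(g)$, i.e. occurs as $[g,{}_r x]$ for infinitely many $r$. The natural route is Heineken's identity from \cite{hei} connecting $[g, {}_k x]$ with left-normed commutators $[x, {}_k g]$-type expressions; combined with the decomposition $V = [V,g] \times C_V(g)$ and the fact that $(g-1)$ is invertible on $[V,g]$, this should let one realize each $w \in [V,g]$ as such a right-Engel commutator. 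I would treat this as a short computation invoking Heineken's formula rather than reproving it, since the paper explicitly attributes the connection to \cite{hei}; the subtlety is just bookkeeping with the coprime action to confirm no element of $[V,g]$ escapes the sink.
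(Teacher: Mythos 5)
Your argument for the left sink is essentially complete, and your containment $\mathscr R_{V\langle g\rangle}(g)\subseteq [V,g]$ is also fine (for $x=vg^k$ one indeed gets $[g,x]=[g,v]^{g^k}\in[V,g]$ and $[V,g]$ is normal in $V\langle g\rangle$). The genuine gap is the reverse inclusion $[V,g]\subseteq \mathscr R_{V\langle g\rangle}(g)$, which you explicitly defer to ``a short computation invoking Heineken's formula''; this is precisely the only nontrivial half of the lemma, and the route you sketch would fail as stated. For $v\in V$ the commutators $[g,\,{}_{r}v]$ are trivial for all $r\geq 2$, because $[g,v]\in V$ and $V$ is abelian, so they cannot ``cycle through'' $[V,g]$ and cannot witness membership of nontrivial elements of $[V,g]$ in a right sink: one must use mixed elements. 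For example, take $x=vg$ and write $V$ additively as a $\langle g\rangle$-module. Since $[u,vg]=[u,g]$ for $u\in V$, one computes $[g,\,{}_{r+1}(vg)]=v(1-g)g(g-1)^{r}$. Coprimeness gives $C_{[V,g]}(g)=1$, so $g$ and $g-1$ restrict to automorphisms of the finite group $[V,g]$; hence for any $w\in[V,g]$ one can choose $v\in[V,g]$ with $v(1-g)g=w$, and if $d$ is the order of $(g-1)|_{[V,g]}$ then $[g,\,{}_{rd+1}(vg)]=w$ for every $r\geq 1$. Thus $w$ occurs as a right Engel commutator of unboundedly many lengths with a fixed witness $x=vg$, so it lies in every right Engel sink of $g$ in $V\langle g\rangle$. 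Without this (or an explicit appeal to Heineken's identity carried through to the end), the equality $\mathscr R_{V\langle g\rangle}(g)=[V,g]$ is not established. Note also that in this paper the lemma is only quoted from the earlier work, with exactly this route (coprime action plus Heineken's connection) indicated; your plan follows that indication but leaves its substantive part undone.

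A smaller, easily repaired point on the left side: you realize $w\in[V,g]$ as $[x,\,{}_{l}g]$ with a witness $x$ depending on $l$, whereas the sink definition requires a single $x$ for which $[x,\,{}_{l}g]=w$ for arbitrarily large $l$ (or else a pigeonhole argument over the finitely many witnesses). This is fixed at once by observing that $u\mapsto[u,g]$ is an automorphism of $[V,g]$, so $[w,\,{}_{ld}g]=w$ for all multiples of its order $d$; with that remark the equality $\mathscr E_{V\langle g\rangle}(g)=[V,g]$ is complete.
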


\begin{remark}\label{r-inh}
In view of Lemma~\ref{l-nakr}(c), the hypotheses of Theorems~\ref{t-main} and~\ref{t-mainr}  are inherited by any  $\f$-invariant section, that is, a quotient $A/B$ of a  $\f$-invariant  subgroup $A$ by a  $\f$-invariant subgroup $B$  normal in $A$. We shall freely use this fact without special references.
\end{remark}

\section{Bounding the index of the soluble radical}\label{s-red}

In this section we perform a reduction of the proof of Theorems~\ref{t-main} and~\ref{t-mainr} to the case of soluble groups. For that we use a recent result of Guralnick and Tracey \cite{gur-tra}.

\begin{theorem}[{\cite[Corollary~1.9]{gur-tra}}]\label{t-gt}
 Let $\alpha\in {\rm Aut}\,G$ be an involutive automorphism of a finite group $G$, and let
$J(\alpha)=\{g\in G\mid \text{$g$ has odd order and } g^{\alpha}=g^{-1}\}$.  If $G=[G,\alpha ]$, then the index of the Fitting subgroup $F(G)$ is bounded in terms of $|J(\alpha )|$, namely,
$|G/F(G)|\leq |J(\alpha )|!^4$.
\end{theorem}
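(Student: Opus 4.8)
The plan is to prove this by a two-stage reduction: first bound the index $|G/R(G)|$ of the soluble radical using the classification of finite simple groups, then bound $|R(G)/F(R(G))|$ by working inside the soluble group $R(G)$, and finally combine the two estimates (note that $F(R(G))=F(G)$, since $F(G)\le R(G)$ is a nilpotent normal subgroup of $G$ and $F(R(G))$ is characteristic in $R(G)$, hence normal and nilpotent in $G$). Writing $d=|J(\alpha)|$, the set $J$ can only shrink on passing to an $\alpha$-invariant subgroup, so $|J_H(\alpha)|\le d$ for every $\alpha$-invariant $H\le G$, and a $d!^2$-type bound from each stage yields a bound of the announced shape $d!^{4}$.

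\emph{Stage 1 (reduction to soluble groups).} Replace $G$ by $\bar G=G/R(G)$; then $F^*(\bar G)=S_1\times\cdots\times S_k$ is a direct product of non-abelian simple groups with $C_{\bar G}(F^*(\bar G))=1$, so $\bar G\hookrightarrow\mathrm{Aut}(F^*(\bar G))$, and the induced automorphism $\bar\alpha$ has order dividing $2$. It permutes the $S_i$ in orbits of length $1$ or $2$. For an orbit $\{S_i,S_j\}$ of length $2$ the elements $(s,s^{-\theta})$ with $s\in S_i$ (where $\theta$ is the isomorphism implementing $\bar\alpha$) are all inverted by $\bar\alpha$, and a positive proportion of them have odd order; for an $\bar\alpha$-stable $S_i$, the relevant classification input is that the number of odd-order elements of a finite non-abelian simple group inverted by an automorphism of order at most $2$ grows without bound with the order of the group (when $\bar\alpha$ acts trivially these are just the involutions, and for a nontrivial action of order $2$ one uses the known structure of centralisers of such automorphisms via Lie-theoretic counting). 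All these odd-order $\bar\alpha$-inverted elements lift, with the usual care about $p'$-parts, into $J(\alpha)$, so $k$ and each $|S_i|$ are $d$-bounded; hence $|F^*(\bar G)|$, and therefore $|G/R(G)|$, is $d$-bounded.

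\emph{Stage 2 (the soluble radical).} It remains to bound $|R/F(R)|$ for the soluble group $R=R(G)$; here one uses that $G=[G,\alpha]$ propagates (possibly after replacing $R$ by $[R,\alpha]$ or by a suitable $\alpha$-invariant section of $d$-bounded index) to the assumption $R=[R,\alpha]$. Put $\bar R=R/F(R)$; by standard soluble-group theory $\bar R$ acts faithfully and completely reducibly on $V:=F(R)/\Phi(F(R))$, and $\alpha$ acts compatibly on the corresponding extension, with $\bar R=[\bar R,\alpha]$ forcing $\alpha$ not to centralise $V$ (otherwise $\bar R$, being faithful on $V$, would be centralised by $\alpha$). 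The crux is the counting identity, valid up to routine bookkeeping about odd order,
\[
\bigl|\,J_{V\rtimes\bar R}(\alpha)\,\bigr|=\sum_{h^{\alpha}=h^{-1}}\bigl|\ker_{V}(\alpha+h)\bigr|,
\]
where the sum is over $h\in\bar R$ inverted by $\alpha$ and $\alpha+h$ denotes the endomorphism $v\mapsto v^{\alpha}+v^{h}$ of the additively written module $V$; the left-hand side is $d$-bounded, being controlled by $|J_R(\alpha)|$. One then shows that a large $\bar R$ makes this sum large: either $\bar R$ already has more than $d$ elements inverted by $\alpha$ and we are done, or their number is $d$-bounded, in which case a Hall--Higman/Turull-type analysis of the operators $\alpha+h$ on the faithful module $V$ — using $\bar R=[\bar R,\alpha]$ to keep the kernels from being simultaneously trivial and to control the Fitting height of $\bar R$ — forces $|\bar R|$ to be $d$-bounded. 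Combined with Stage 1 this gives the stated bound.

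\emph{Main obstacle.} The hard part is Stage 2. The hypothesis is only a cardinality bound on $J(\alpha)$ which, unlike the Hartley--Meixner hypothesis of Theorem~\ref{t-h-m}, does \emph{not} bound $|C_R(\alpha)|$ (a large odd-order subgroup of $C_R(\alpha)$ contributes nothing to $J(\alpha)$), so no existing `almost fixed-point-free' theorem applies directly. Everything rests on the module-counting identity above and on extracting, from the smallness of $\sum_h|\ker_V(\alpha+h)|$ together with $\bar R=[\bar R,\alpha]$, a genuine bound on the order of the linear group $\bar R$, uniformly over all Fitting heights. By comparison the classification input in Stage 1 — counting odd-order $\alpha$-real elements in simple groups — though essential, is routine.
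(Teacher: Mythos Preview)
The paper does not contain a proof of this theorem: it is quoted as Corollary~1.9 of Guralnick and Tracey \cite{gur-tra} and used as a black box in the proof of Proposition~\ref{pr}. There is therefore no ``paper's own proof'' against which to compare your attempt.

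As to your sketch on its own merits, it is an outline with the hard step missing. In Stage~2 you correctly isolate the real difficulty --- a bound on $|J(\alpha)|$ does \emph{not} bound $|C_R(\alpha)|$, so no existing almost-fixed-point-free theorem applies --- and you write down a reasonable counting identity, but the phrase ``a Hall--Higman/Turull-type analysis \ldots\ forces $|\bar R|$ to be $d$-bounded'' is exactly the content of the soluble case of the theorem and is not supplied. Stage~1 also has gaps: the claim that odd-order $\bar\alpha$-inverted elements of $G/R(G)$ lift into $J(\alpha)$ in $G$ is asserted (``the usual care about $p'$-parts'') but not argued, and your parenthetical that when $\bar\alpha$ acts trivially on a simple factor the inverted elements ``are just the involutions'' in fact shows that there are \emph{no} odd-order inverted elements there apart from $1$, so a separate argument is needed to rule out large $\bar\alpha$-centralized simple factors compatible with $\bar G=[\bar G,\bar\alpha]$. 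For an actual proof you should consult \cite{gur-tra}.
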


In fact, we can derive from Theorem~\ref{t-gt} a general proposition about groups of coprime automorphisms whose fixed points have bounded Engel sinks.
We state this proposition here in greater generality than needed in the present paper, as it may find applications in other studies.

\begin{prop}\label{pr}
 Let $G$ be a finite group admitting a group of coprime automorphisms~$H$. Suppose that $m$ is a positive integer  such that every $2$-element in $C_G(H)$ has a left (or right) Engel sink  of cardinality at most~$m$. Then the soluble radical $S(G)$ of $G$ has $m$-bounded index.
\end{prop}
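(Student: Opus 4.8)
The plan is to reduce to the situation of a single involutive automorphism and then apply the Guralnick--Tracey bound (Theorem~\ref{t-gt}). Let $S=S(G)$ be the soluble radical. First I would pass to the quotient $\bar G=G/S$, which is $H$-invariant, and by Remark~\ref{r-inh} (more precisely by Lemma~\ref{l-nakr}(c)) the hypothesis on Engel sinks is inherited: every $2$-element of $C_{\bar G}(H)$ has a left (or right) Engel sink of cardinality at most $m$. So it suffices to show that a finite group with trivial soluble radical admitting such an action of $H$ has $m$-bounded order. Since $S(\bar G)=1$, the generalized Fitting subgroup $F^*(\bar G)=E(\bar G)=\operatorname{Soc}(\bar G)$ is a direct product of nonabelian simple groups, and $C_{\bar G}(F^*(\bar G))=1$, so $\bar G$ embeds into $\operatorname{Aut}(F^*(\bar G))$.

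The key reduction is to an involution. Choose a prime $p$ dividing $|H|$ and a subgroup $\langle\f\rangle\le H$ of order $p$; it acts coprimely on $\bar G$. A nonabelian finite simple group has even order, and I would like to produce a $2$-element centralized by $\f$ to which Theorem~\ref{t-h-m} or Theorem~\ref{t-gt} applies. The cleanest route: by Lemma~\ref{l-nakr}(a) there is a $\f$-invariant Sylow $2$-subgroup $P$ of $\bar G$; since the action is coprime, $C_P(\f)\ne 1$ whenever $P\ne 1$, and by coprimality $C_P(\f)$ is a nontrivial $2$-group all of whose elements are $2$-elements of $C_{\bar G}(H)$ — wait, they lie in $C_{\bar G}(\f)$, and we only control $2$-elements of $C_{\bar G}(H)$. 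To fix this I would instead work with $H$ itself: take a $\f$-invariant — in fact $H$-invariant — Sylow $2$-subgroup $Q$ of $\bar G$ (Lemma~\ref{l-nakr}(a) applied to the group $H$, using that Sylow subgroups can be chosen normalized by a coprime group), so that $C_Q(H)$ is a nontrivial $2$-group contained in $C_{\bar G}(H)$. Now apply Lemma~\ref{l-sink}: for a suitable elementary abelian $\f$-invariant section one gets that the Engel sink of a $2$-element $t\in C_Q(H)$ contains $[V,t]$, forcing $|[V,t]|\le m$ for every such $t$ and every $t$-invariant abelian section $V$ of $2'$-order on which $t$ acts.

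With this in hand, here is how I expect the endgame to go. Working inside $\bar G$, consider the action of a fixed $2$-element $t\in C_Q(H)$ of order $2$ (if $C_Q(H)$ has order $>1$ it contains an involution) on the $H$-invariant odd-order part: more precisely, apply Theorem~\ref{t-h-m} or Theorem~\ref{t-gt} to the involution $t$ acting on each $t$-invariant $2'$-section. Since every element of $C_{\bar G}(t)$ of suitable type has a bounded Engel sink, and in particular via Lemma~\ref{l-sink} the relevant fixed-point data $|C_V(t)|$ or the odd-order "negative part" $J(t)$ is $m$-bounded on each section, Theorem~\ref{t-gt} gives that $|\langle t^{\bar G}\rangle/F(\langle t^{\bar G}\rangle)|$, hence the relevant sections, are $m$-bounded. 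Because $S(\bar G)=1$, any nontrivial normal subgroup has a nonabelian composition factor, and bounding the order of the images of $t$ in the almost simple sections $N_i\le\bar G/\prod_{j\ne i}(\cdots)$ forces each nonabelian simple section to have $m$-bounded order; there are then only $m$-boundedly many of them because $t$ must act nontrivially (its centralizer being too small otherwise), so $|F^*(\bar G)|$ is $m$-bounded, whence $|\bar G|\le|\operatorname{Aut}(F^*(\bar G))|$ is $m$-bounded and $|G/S(G)|$ is $m$-bounded.

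The main obstacle I anticipate is the bookkeeping in the last paragraph: extracting from "every $2$-element of $C_{\bar G}(H)$ has a small Engel sink" a genuinely applicable hypothesis for Theorem~\ref{t-gt}, which is phrased in terms of the odd-order inversion set $J(\alpha)$ of a single involution $\alpha$. One must (i) locate an honest involution $t$ lying in $C_{\bar G}(H)$ — guaranteed by a $\f$-invariant, indeed $H$-invariant, Sylow $2$-subgroup being nontrivial and having nontrivial $H$-fixed points in the coprime action — and (ii) translate the Engel-sink bound into a bound on $|C_V(t)|$ for $2'$-sections $V$ via Lemma~\ref{l-sink}, and thereby on $|J(t)|$ using Lemma~\ref{l-ab} ($V=[V,t]\times C_V(t)$, and $t$ inverts a complemented piece of $[V,t]$). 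Once $|J(t)|$ is $m$-bounded, Theorem~\ref{t-gt} applied inside the (necessarily) $t$-generated normal closure, combined with the fact that $S(\bar G)=1$ forces $\bar G$ to be built out of nonabelian simple groups on which $t$ acts with $m$-bounded "size", closes the argument. The coprimality of $H$ with $|G|$ is used throughout to guarantee invariant Sylow and Hall subgroups and the covering property $C_{\bar G}(H)=C_G(H)S/S$.
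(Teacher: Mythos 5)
Your skeleton is the same as the paper's (pass to $G/S(G)$, write $F^*$ as a product of nonabelian simple factors, locate an involution fixed by $H$, bound its inverted set $J$ via Engel sinks, apply Theorem~\ref{t-gt}), but there are two genuine gaps at exactly the delicate points. First, the existence of an involution in $C_{\bar G}(H)$: you justify it by saying that a coprime action on a nontrivial Sylow $2$-subgroup must have nontrivial fixed points, which is false (an automorphism of order $3$ acts fixed-point-freely on $C_2\times C_2$; coprimality never by itself forces fixed points). What is true is that $C_Q(H)$ is a Sylow $2$-subgroup of $C_{\bar G}(H)$ for an $H$-invariant Sylow $2$-subgroup $Q$, so the question is whether $C_{\bar G}(H)$ has even order --- and that is precisely where the paper invokes the CFSG-based theorem of Wang and Chen: if the fixed-point subgroup of a coprime automorphism group had odd order, the group would be soluble. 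Without citing that (or an equivalent), the step ``$C_Q(H)\ne 1$'' is unsupported, and it is the crux of the whole reduction.

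Second, Theorem~\ref{t-gt} has the hypothesis $G=[G,\alpha]$, and an arbitrary involution $t\in C_{\bar G}(H)$ need not satisfy it: $t$ may project trivially onto (even centralize) many of the simple factors, and then your closing count ``only $m$-boundedly many factors because $t$ must act nontrivially'' has no justification. The paper handles this by choosing, in the centralizer of $H$ in the product over each $H$-orbit of simple factors, an involution (again via Wang--Chen), whose fixed nontrivial projection onto every factor of that orbit is automatic, and multiplying these over all orbits to get a single involution $\alpha\in C_{\bar G}(H)$ with nontrivial projection on every $S_i$; then $\bar G=[\bar G,\alpha]$, $F(\bar G)=1$, and one application of Theorem~\ref{t-gt} bounds $|\bar G|$ outright --- no factor-by-factor or section-by-section bookkeeping is needed. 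Relatedly, your translation of the sink hypothesis drifts: the hypothesis concerns $2$-elements of $C_{\bar G}(H)$, not of $C_{\bar G}(t)$, and what Lemma~\ref{l-sink} bounds is $[V,t]$, not $|C_V(t)|$; the clean statement (and the one the paper uses) is that every odd-order $g$ with $g^t=g^{-1}$ satisfies $\langle g\rangle=[\langle g\rangle,t]\subseteq\mathscr E(t)\cap\mathscr R(t)$, so $|J(t)|\le m$ directly. With those two repairs your argument collapses to the paper's proof; as written, it does not go through.
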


(In the statement,  ``left (or right)'' is applied individually to the $2$-elements of $C_G(H)$, so that it may be different, left or right, Engel  sinks for different  $2$-elements of $C_G(H)$.)

\begin{proof}
  Since the hypothesis is inherited by $G/S(G)$, we can assume that $S(G)=1$. Then the generalized Fitting subgroup
  \begin{equation}\label{e-gt}
    F^*(G)=S_1\times \dots \times S_n
  \end{equation}
  is a direct product of non-abelian finite simple groups  $S_i$, which are permuted by  $H$.
  Since  the centralizer of $F^*(G)$ is trivial and $G$ embeds into the automorphism group of $F^*(G)$, it is sufficient to  obtain a bound for $|F^*(G)|$ in terms of $m$. We can simply assume that $G=F^*(G)$.

  As proved by Wang and Chen \cite{wan-che} on the basis of the classification of finite simple groups, a finite group admitting a group of coprime automorphisms with fixed-point subgroup of odd order is soluble. Therefore $C_G(H)$ contains involutions. Moreover, we can choose an involution $\alpha\in C_G(H)$ with non-trivial projections onto each of the factors $S_i$ in \eqref{e-gt}.   Indeed, let  $\{S_{i_1},\dots,S_{i_k}\}$ be one of the orbits of $H$ in its permutational action on the set $\{S_1,\dots,S_n\}$.   Clearly, any non-trivial element of the  product $S_{i_1}\times \cdots\times S_{i_k}$ centralized by $H$ must have non-trivial projections onto each of the factors $S_{i_j}$. There is an involution in the centralizer of $H$ in the product $S_{i_1}\times \cdots\times S_{i_k}$. We now take $\alpha\in C_G(H)$ to be the product of these involutions over all orbits of $H$. Then $\alpha\in C_G(H)$ is an involution with non-trivial projections onto each of the factors $S_{i}$ in \eqref{e-gt}. It follows that $G=F^*(G)=[G,\alpha ]$.

  If $g\in J(\alpha )$, that is, $g^{\alpha }=g^{-1}$, then $[\langle g\rangle,\alpha ]=\langle g\rangle$, since $g$ has odd order. Then $\langle g\rangle\subseteq \mathscr R (\alpha)\cap \mathscr E(\alpha)$ by Lemma~\ref{l-copr}. Therefore, $J(\alpha )\subseteq \mathscr R (\alpha)\cap \mathscr E(\alpha)$, so that $|J(\alpha )|\leq m$. We also have $[G,\alpha ]=G$ and $F(G)=1$, and Theorem~\ref{t-gt} completes the proof.
\end{proof}

In our situation, as an obvious consequence we obtain the following.

\begin{corollary}
\label{l-srs}
Let $G$ be a finite group admitting  a coprime automorphism $\f$ of prime order. Suppose that $m$ is a positive integer such that one of the following conditions holds (or both):
\begin{itemize}
  \item[\rm (a)] every element $g\in C_G(\f)$ has a left Engel sink ${\mathscr E}(g)$ of cardinality at most $m$, or
  \item[\rm (b)] every element $g\in C_G(\f)$ has a right Engel sink ${\mathscr R}(g)$ of cardinality at most $m$.
\end{itemize}
  Then the soluble radical  $S(G)$ has $m$-bounded index in $G$.
\end{corollary}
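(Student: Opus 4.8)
The plan is simply to invoke Proposition~\ref{pr} with the group of automorphisms $H=\langle\f\rangle$. The first step is to note that $\langle\f\rangle\leq{\rm Aut}\,G$ is indeed a group of coprime automorphisms of $G$, since $|\langle\f\rangle|=|\f|$ is a prime coprime to $|G|$; moreover its fixed-point subgroup is $C_G(\langle\f\rangle)=C_G(\f)$.

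The second step is to check that either hypothesis (a) or hypothesis (b) supplies exactly the input required by Proposition~\ref{pr}. That proposition only asks that every $2$-element of $C_G(H)$ have a left \emph{or} right Engel sink of cardinality at most $m$ (possibly with a different side for different $2$-elements). Under (a), \emph{every} element of $C_G(\f)$ has a left Engel sink ${\mathscr E}(g)$ of cardinality at most $m$, so in particular every $2$-element of $C_G(\f)$ does; under (b), the same holds for right Engel sinks. In both cases the hypothesis of Proposition~\ref{pr} holds, indeed uniformly on one side, hence a fortiori in the individualised form stated there.

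Applying Proposition~\ref{pr} then gives at once that the soluble radical $S(G)$ has $m$-bounded index in $G$, as required. There is no genuine obstacle here: this is a direct specialisation of Proposition~\ref{pr}, the only observations being that $\langle\f\rangle$ is a coprime group of automorphisms with $C_G(\langle\f\rangle)=C_G(\f)$, and that restricting the Engel-sink condition from all elements of $C_G(\f)$ to its $2$-elements only weakens the hypothesis.
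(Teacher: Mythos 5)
Your proof is correct and coincides with the paper's intended argument: the paper states the corollary as an immediate consequence of Proposition~\ref{pr}, obtained exactly as you do by taking $H=\langle\f\rangle$ and noting that the hypothesis on all elements of $C_G(\f)$ in particular covers its $2$-elements.
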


\section{Completion of the proof for left Engel sinks}\label{s-l}

\begin{proof}[Proof of Theorem~\ref{t-main}] Recall that $G$ is a finite group admitting  an automorphism $\f$ of prime order  $p$ coprime to $|G|$, and every element of $C_G(\f)$ has a left Engel sink of cardinality at most $m$; we need to prove that  $|G/F_2(G)|$ is $m$-bounded. By Corollary~\ref{l-srs}(a) the index of the soluble radical $|G/S(G)|$ is $m$-bounded. Therefore we can assume that $G$ is soluble.

First we obtain a `weak' upper bound for $|G/F_2(G)|$ in terms of $p$ and $m$.

\begin{lemma}\label{l-sol}
The index of $F_2(G)$ is $(p,m)$-bounded. \end{lemma}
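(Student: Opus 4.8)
The plan is to squeeze the fixed-point subgroup $C:=C_G(\f)$ down to an almost nilpotent group, then to turn the Engel-sink hypothesis into commutator bounds and feed these into Hartley--Meixner's theorem after a Frattini reduction.

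\textbf{Squeezing $C$.} I would first note that $C$ satisfies the hypothesis of Theorem~\ref{t-f}: for $g\in C$ the set $\mathscr E(g)\cap C$ is a left Engel sink of $g$ computed inside $C$, of cardinality at most $m$. Hence $C$ has a normal subgroup of $m$-bounded order with nilpotent quotient; in particular $|\gamma_\infty(C)|$ is $m$-bounded (so, via a Carter subgroup, $C$ even has a nilpotent subgroup of $m$-bounded index). I would also record, for repeated use, the following consequence of Lemma~\ref{l-sink}: if $g\in C$ and $V\leq G$ is abelian, $\langle g\rangle$-invariant, with $(|V|,|g|)=1$, then $[V,g]=\mathscr E_{V\langle g\rangle}(g)\subseteq\mathscr E(g)$, so $|[V,g]|\leq m$.

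\textbf{Reduction and the model case.} Since passing to $G/\Phi(G)$ changes neither the hypotheses (Remark~\ref{r-inh}) nor $|G/F_2(G)|$, I would assume $\Phi(G)=1$; then $F:=F(G)$ is a direct product of elementary abelian subgroups $F_q=O_q(G)$, one has $C_G(F)=F$, and $\bar G:=G/F$ acts faithfully and completely reducibly on $F$ (and $O_q(\bar G)$ acts trivially on $F_q$), so the aim is to bound $|\bar G/F(\bar G)|=|G/F_2(G)|$. The clean model situation is when $F$ is a single minimal normal subgroup $W$, say an $\mathbb F_q$-module with $q\nmid|G/W|$: since $W\not\leq\Phi(G)=1$, a maximal subgroup gives a splitting $G=W\rtimes H$ with $H\cong\bar G$ acting faithfully irreducibly on $W$, one may take $H$ to be $\f$-invariant (the complements of $W$ form a set of $q$-power, hence $p'$-, size), the image of $C$ in $H$ equals $C_H(\f)$ (because $C_W(\f)\leq W$), and the commutator bound above applied with $V=W$ gives $|[W,h]|\leq m$ for all $h\in C_H(\f)$; then Lemma~\ref{l33} bounds $|C_H(\f)|$ and Theorem~\ref{t-h-m} gives that $|H/F(H)|=|G/F_2(G)|$ is $(p,m)$-bounded. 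For the general case, the corresponding ``generic'' input is: for each prime $q$ a Hall $q'$-subgroup $K$ of $C$ acts coprimely on the abelian group $F_q$ with $|[F_q,u]|\leq m$ for all $u\in K$, so by Lemma~\ref{l33} the image of $K$ in $\mathrm{Aut}(F_q)$ is $m$-bounded, and, together with Lemma~\ref{l-nakr}(c), Theorem~\ref{t-h-m} can be applied to the image $\bar G_q$ of $\bar G$ in $\mathrm{Aut}(F_q)$.

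\textbf{The main obstacle: the $q$-singular contribution.} What is not covered by the above is the behaviour, for primes $q$ dividing $|\bar G|$, of elements of $C$ of order divisible by $q$ acting on $O_q(G)$ as nontrivial unipotent maps from outside $F(G)$: the Engel-sink hypothesis (hence the commutator bound, which needs coprimality) says nothing about them, yet such elements need not lie in $F_2(G)$, so in principle they could inflate $|G/F_2(G)|$. Resolving this is where the almost nilpotence of $C$ must be used essentially: $|\gamma_\infty(C)|$ being $m$-bounded confines the non-nilpotent part of the action of $C$ on $F$ to $m$-boundedly many primes and forces the relevant Sylow $q$-subgroups of $C$ to be nilpotent up to $m$-bounded index, and one then has to pin down precisely how these $q$-singular elements sit relative to $F_2(G)/F(G)=F(\bar G)$ (recalling that $O_q(\bar G)$ is exactly the part of $F(\bar G)$ invisible on $F_q$). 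Combining that analysis with the per-prime bounds above, with Lemma~\ref{l-nakr}(c), and with Lemma~\ref{l-gf}, one should be able to reassemble the pieces into the desired $(p,m)$-bound for $|\bar G/F(\bar G)|=|G/F_2(G)|$; controlling this interaction between the two Fitting layers is the hard part of the argument.
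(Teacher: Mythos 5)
Your reduction to $\Phi(G)=1$, the coprime commutator bound $|[V,g]|\leq m$ via Lemma~\ref{l-sink}, and the plan to feed a fixed-point bound into Theorem~\ref{t-h-m} are all in the right spirit, and your model case is fine. But there is a genuine gap, and you name it yourself: the ``$q$-singular contribution''. Your only quantitative inputs are (i) $|\gamma_\infty(C_G(\f))|$ is $m$-bounded and (ii) $|[F_q,u]|\leq m$ for $q'$-elements $u\in C_G(\f)$; neither of these says anything about a $q$-element of $C_G(\f)$ acting nontrivially (unipotently) on $O_q(G)$ from outside $F_2(G)$, so the hypothesis of Theorem~\ref{t-h-m} --- a bound on the \emph{full} fixed-point subgroup of $\f$ on the relevant section --- is never verified; at the decisive moment you only assert that ``one should be able to reassemble the pieces''. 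There is also a secondary unaddressed point: even granting a per-prime bound on $|\bar G_q/F(\bar G_q)|$, assembling these into a bound on $|\bar G/F(\bar G)|$ requires bounding the number of primes $q$ for which $\bar G_q$ is non-nilpotent. As it stands the proposal is a strategy with its hardest step missing, not a proof.

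The paper closes exactly this gap by a different pivot: instead of analysing the action of $C_G(\f)$ on $F(G)$ prime by prime, it applies Theorem~\ref{t-f} to the subgroup $F_1C_{F_2}(\f)$, where $F_1=F(G)$ is abelian after the Frattini reduction. For $g=uc$ with $u\in F_1$ and $c\in C_{F_2}(\f)$, a sufficiently long commutator $[h,\,{}_{k}g]$ lies in the abelian group $F_1$, where the $u$-part of $g$ is invisible, so ${\mathscr E}(g)\subseteq {\mathscr E}(c)$ and $|{\mathscr E}(g)|\leq m$; hence $\gamma_\infty(F_1C_{F_2}(\f))$ has $m$-bounded order, and since $F_1C_{F_2}(\f)$ is subnormal its Fitting subgroup equals $F_1$, which gives (via Lemma~\ref{l-gf}) that $|C_{F_2/F_1}(\f)|$ is $m$-bounded --- including any $q$-part, which is precisely what your coprime commutator bound cannot reach. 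Repeating the argument one layer up bounds $|C_{F_3/F_1}(\f)|$, Theorem~\ref{t-h-m} applied to the section $F_3/F_1$ (not to the action on $F$) bounds $|F_3/F_2|$, and solubility gives $C_{G/F_2}(F_3/F_2)\leq F_3/F_2$, so $G/F_2$ embeds into $\mathrm{Aut}(F_3/F_2)$ and has $(p,m)$-bounded order. This two-layer trick is the idea missing from your outline.
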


\begin{proof}
By Gasch\"utz's theorem the image of the Fitting subgroup $F(G)$ in the quotient  $G/\Phi (G)$  of $G$ by the Frattini subgroup $\Phi (G)$ is the Fitting subgroup of this quotient (see \cite[5.2.15]{rob}),
and hence the same holds for all terms of the Fitting series. Therefore, since the hypothesis is inherited by $G/\Phi (G)$, we can assume from the outset that $\Phi (G)=1$. Then $F(G)$ is abelian by Gasch\"utz's theorem.

To lighten the notation, let $F_i=F_i(G)$. Since $F_1$ is abelian, every element $g\in
F_1C_{F_2}(\f)$  has Engel sink of cardinality at most $m$. Indeed, let $g=uc$, where  $u\in F_1$ and $c\in C_{F_2}(\f)$. For any $h\in G$, a long enough commutator $d=[h,\,{}_kg]$ belongs to $F_1$, since $F_2/F_1$ is nilpotent. Then $[d,\,{}_ng]=[d,\,{}_nuc]=[d,\,{}_nc]$ for any $n$, since $F_1$ is abelian. As  a result,  ${\mathscr E}(g)$ is contained in ${\mathscr E}(c)$, which has cardinality at most $m$ by hypothesis.

  Applying Theorem~\ref{t-f}, we now obtain that $\g _{\infty}(F_1C_{F_2}(\f))$  has $m$-bounded order. It follows that the Fitting subgroup  $F(F_1C_{F_2}(\f))$ has $m$-bounded index in $F_1C_{F_2}(\f)$
 by Lemma~\ref{l-gf}.
  But the Fitting subgroup  $F(F_1C_{F_2}(\f))$  is equal to $F_1$, since $F_1C_{F_2}(\f)$ is a subnormal subgroup of $G$. Hence $C_{F_2/F_1}(\f) =C_{F_2}(\f)F_1/F_1$ has $m$-bounded order.

  We can apply the same arguments to the quotient of $G/F_1$ by its Frattini subgroup. We obtain that $C_{F_3/F_2}(\f)$ also has $m$-bounded order. As a result,  $C_{F_3/F_1}(\f)$ has $m$-bounded order. By Theorem~\ref{t-h-m} then the Fitting subgroup $F(F_3/F_1)$ has  $(p,m)$-bounded  index in $F_3/F_1$. But, obviously,  $F(F_3/F_1)=F_2/F_1$, so that $F_3/F_2$ has $(p,m)$-bounded order. Since $G$ is soluble,  $F_3/F_2$ contains its centralizer in $G/F_2$. Then the group $G/F_2$ embeds into the automorphism group of $F_3/F_2$ and therefore also has $(p,m)$-bounded order.
\end{proof}

We proceed with the proof of Theorem~\ref{t-main}, where we need to obtain a `strong' bound, in terms of $m$ only, for $|G/F_2(G)|$. If $p\leq m$, then the bound for $|G/F_2(G)|$ in terms of $p$ and $m$ obtained in Lemma~\ref{l-sol} is a required bound in terms of $m$ only. Therefore we can assume that $p>m$.
 We observe that when $x\in C_G(\f)$, the Engel sink $ \mathscr E(x)$ is $\f$-invariant. Then the  condition  $p>m\geq |\mathscr E(x)|$ implies that
    \begin{equation}\label{e0}
    \mathscr E(x)\subseteq C_G(\f )\qquad\text{for every $x\in C_G(\f)$}.
  \end{equation}

  Let $N=\langle C_G(\f )^G\rangle$ be the normal closure of $C_G(\f )$. Our goal now is to prove that $\gamma_{\infty}(N)$ has $m$-bounded order. The bulk of the proof is in the following key lemma.

 \begin{lemma}\label{l-gnc}
    We have $\gamma_{\infty}(N) \leq C_G(\f )$.
 \end{lemma}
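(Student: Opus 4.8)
The plan is to show that the lower central series of $N=\langle C_G(\f)^G\rangle$ stabilizes inside $C_G(\f)$ by exploiting the fact \eqref{e0} that the Engel sinks of elements of $C_G(\f)$ are themselves contained in $C_G(\f)$. First I would reduce to a convenient setting: since the hypotheses pass to $\f$-invariant sections (Remark~\ref{r-inh}), I may work modulo $\gamma_\infty(N)$-related obstructions by downward induction on $|G|$, assuming the conclusion holds in every proper $\f$-invariant quotient. In particular, if $K$ is any nontrivial proper $\f$-invariant normal subgroup of $G$ contained in $\gamma_\infty(N)$, then by induction $\gamma_\infty(\bar N)\le C_{\bar G}(\f)$ in $\bar G=G/K$, and since $C_{\bar G}(\f)=C_G(\f)K/K$ by Lemma~\ref{l-nakr}(c), we would get $\gamma_\infty(N)\le C_G(\f)K$; one then has to run a further argument to remove the $K$. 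So the heart of the matter is the case where $\gamma_\infty(N)$ is, up to the inductive reduction, small or a minimal normal subgroup.

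The key mechanism is the following. Take $x\in C_G(\f)$ and $h\in G$. By the definition of the left Engel sink, $[h,\,{}_l x]\in \mathscr E(x)$ for all large $l$, and by \eqref{e0} each such commutator lies in $C_G(\f)$. Thus for every $h\in G$ and every $x\in C_G(\f)$, sufficiently long commutators $[h,\,{}_l x]$ are fixed by $\f$. I would then feed this into the coprime-action machinery: writing $[h,\,{}_lx]=c$ with $c\in C_G(\f)$, and using that $N$ is generated by $G$-conjugates of $C_G(\f)$, one aims to show that the commutators $[h,\,{}_lx]$ that generate a typical term $\gamma_k(N)$ for large $k$ all lie in $C_G(\f)$. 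More precisely, $\gamma_\infty(N)$ is generated by long commutators in elements of $N$, each of which is a product of conjugates $x^g$ with $x\in C_G(\f)$; pushing these commutators out far enough lands them in the (common) region where Engel-sink membership forces $\f$-fixedness, provided one is careful that conjugation by $g\in G$ does not destroy the $C_G(\f)$-membership — this is where one uses that $\gamma_\infty(N)$ is $\f$-invariant and, after the inductive reduction, may be taken abelian or close to it, so that Lemma~\ref{l-ab} and the decomposition $\gamma_\infty(N)=[\gamma_\infty(N),\f]\times C_{\gamma_\infty(N)}(\f)$ are available, and one only needs to kill the $[\gamma_\infty(N),\f]$-part.

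To kill $[\gamma_\infty(N),\f]$, I would argue that $C_G(\f)$ centralizes $[\gamma_\infty(N),\f]$: for $x\in C_G(\f)$, the map $v\mapsto [v,x]$ on the abelian $\f$-invariant section $[\gamma_\infty(N),\f]$ commutes with $\f$, so its image is $\f$-invariant; but that image, consisting of values $[v,\,{}_1x]$ iterated, eventually lands in $\mathscr E(x)\subseteq C_G(\f)$, and since $[\gamma_\infty(N),\f]\cap C_G(\f)=1$ we conclude $[[\gamma_\infty(N),\f],x]=1$ after finitely many steps — and then, the action being coprime and the relevant subgroup abelian, $[\gamma_\infty(N),\f]$ is already centralized by $x$. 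As $x$ ranges over $C_G(\f)$ this shows $N$, hence $\gamma_\infty(N)$, centralizes $[\gamma_\infty(N),\f]$; combined with the fact that $G/C_G(\gamma_\infty(N))$ embeds in $\mathrm{Aut}(\gamma_\infty(N))$ and some clean-up with Lemma~\ref{l-frob} on the Frobenius-type action of $\langle\f\rangle$-related groups, this forces $[\gamma_\infty(N),\f]=1$, i.e. $\gamma_\infty(N)\le C_G(\f)$.

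The main obstacle I anticipate is the bookkeeping in the inductive step: ensuring that "sufficiently long" commutators in generators of $\gamma_\infty(N)$ can be taken simultaneously long enough to lie in all the relevant Engel sinks, and that the $G$-conjugation built into the definition of $N$ interacts correctly with $\f$-invariance — in other words, controlling $[\gamma_\infty(N),\f]$ rather than just $\gamma_\infty(C_G(\f))$. The coprimeness of $\f$ and the abelian reduction coming from the minimal-normal-subgroup case are what make this tractable, but the passage from "each individual element of $C_G(\f)$ has sink in $C_G(\f)$" to "the whole subgroup $\gamma_\infty(N)$ is in $C_G(\f)$" is the delicate point and will require the induction to be set up carefully, probably by first handling the case $\gamma_\infty(N)$ a $p'$-group acted on by $\langle\f\rangle$ with the extra leverage of Thompson's and Hartley–Meixner's theorems already in play from Lemma~\ref{l-sol}.
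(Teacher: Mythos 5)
There is a genuine gap at the decisive step of your plan, namely where you ``kill'' $[\gamma_\infty(N),\f]$ by arguing that every $x\in C_G(\f)$ centralizes it. The mechanism you invoke (iterated commutators $[v,\,{}_k x]$ eventually lie in $\mathscr E(x)\subseteq C_G(\f)$ by \eqref{e0}, and $[\gamma_\infty(N),\f]\cap C_G(\f)=1$, hence $[[\gamma_\infty(N),\f],x]=1$) is only valid when the order of $x$ is coprime to the order of the section being acted on: that is exactly the content of Lemma~\ref{l-sink}, which requires $(|V|,|g|)=1$ so that $[V,g]$ coincides with the Engel sink of $g$ in $V\langle g\rangle$. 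After the natural reductions (minimal normal subgroup, $\gamma_\infty(N)$ an elementary abelian $q$-group), the problematic elements are precisely the $q$-elements of $C_G(\f)$: for those, $W\langle x\rangle$ with $W=[\gamma_\infty(N),\f]$ is a $q$-group, the iterated commutators $[v,\,{}_kx]$ stabilize at $1$ for trivial reasons, and membership of the sink in $C_G(\f)$ yields no information whatsoever about $[W,x]$. Your phrase ``the action being coprime'' assumes the coprimality that is missing; the coprimality in the hypotheses is between $\f$ and $G$, not between elements of $C_G(\f)$ and $q$-sections of $G$. (There are further unproved assertions in the same paragraph: $[\gamma_\infty(N),\f]$ need not be normalized by arbitrary $g\in G$, so centralizing by $C_G(\f)$ does not automatically propagate to $N=\langle C_G(\f)^G\rangle$; and even granting that $N$ centralizes $[\gamma_\infty(N),\f]$, the inference that this forces $[\gamma_\infty(N),\f]=1$ via $\mathrm{Aut}(\gamma_\infty(N))$ and Lemma~\ref{l-frob} is not substantiated.)

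This non-coprime case is not a technicality but the heart of the lemma, and the paper's proof is built around it. The actual argument does use your observation, but only for $q'$-elements of $C_G(\f)$ (formula \eqref{e1}); it then shows successively that $O^q(\gamma_\infty(N))$ can be assumed trivial and $\Phi(\gamma_\infty(N))=1$, so $\gamma:=\gamma_\infty(N)$ is elementary abelian of exponent $q$, picks a $\f$-invariant Hall $q'$-subgroup $R$ of $N$ with $C_\gamma(R)=1$, extends scalars to a splitting field, and analyzes each irreducible factor $U$ via Clifford theory: $C_R(\f)$ must act non-trivially on some Wedderburn component $W_1$, a diagonal argument excludes regular $\f$-orbits of length $p$ on the components, so $U$ is $\bar R$-homogeneous, and then $Z(\bar R)$ acts by scalars, commutes with $\f$, and a fixed element $z\in C_R(\f)$ with $U=[U,z]\leq C_U(\f)$ forces $[U,\f]=0$, whence $[\gamma,\f]=1$. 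Your plan contains no substitute for this representation-theoretic step, so as it stands it does not prove the lemma.
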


 \begin{proof}
 We use induction on the order of $G$. To lighten the notation, we write $\g =\gamma_{\infty}(N)$.

 Recall that $G$ is soluble. Let $V$ be a minimal normal subgroup of $G\langle\f\rangle$, so it is an elementary abelian $q$-group for some prime $q$. By induction applied to $G/V$, we have $\gamma \leq VC_G(\f )$. By the minimality of $V$, either $\g \cap V=1$ or $\g \geq V$. If $\g \cap V=1$, then $[\g, \f]\leq [VC_G(\f ), \f]=[V,\f]\cap \g=1$, whence $\g\leq C_G(\f)$ and the proof is complete.  Hence we can assume that $\g \geq V$.

 We have $V=[V,\f ]\times C_V(\f)$ by Lemma~\ref{l-ab}. The subgroup $[V,\f]$ is $C_G(\f)$-invariant. Hence, $\g =[V,\f](C_G(\f)\cap \g)$. Any $q'$-element $x$ of $C_G(\f)$ acts trivially on $[V,\f]$ by Lemma~\ref{l-sink} because  $\mathscr E(x)\subseteq C_G(\f)$ by \eqref{e0}. We record this property in the form
 \begin{equation}\label{e1}
   [\g,x]\leq C_G(\f)\qquad \text{for any $q'$-element $x$ of $C_G(\f)$.}
 \end{equation}

 Any Hall $q'$-subgroup of $\g$ has the form $H^v$, where $v\in V$ and $H$ is a Hall $q'$-subgroup of $C_G(\f)\cap \g$. Let $v=v_1v_2$ for $v_1\in [V,\f]$ and $v_2\in C_V(\f)$. Since $[H, v_1]=1$ by \eqref{e1},  we have $H^v=H^{v_2}\leq C_G(\f)\cap \g$. As a result, the subgroup $O^q(\g)$ generated by all its Hall $q'$-subgroups is contained in $C_G(\f)\cap \g$. If $O^q(\g)\ne 1$, then by induction $\g\leq O^q(\g)C_G(\f)\leq C_G(\f)$ and the proof is complete. Therefore we can assume that  $O^q(\g)= 1$, that is,  $\g$ is a $q$-group.

 If $\Phi (\g)\ne 1$, then $\g\leq \Phi (\g) C_V(\f)$ by induction. But
 $$\Phi (\g)=\big[[V,\f ], C_G(\f)\cap \g\big]\cdot \Phi (C_G(\f)\cap \g),$$
 so then $[V,\f ]\leq \big[[V,\f ], C_G(\f)\cap \g\big]$, where the right-hand side is strictly smaller than $[V,\f ]$, since $\g$ is nilpotent. This is a contradiction, unless $[V,\f ]=1$, when the proof is complete. Hence we can assume that $\Phi (\g)= 1$, so that $\g$ is an elementary abelian $q$-group.

 Let $R$ be a $\f$-invariant  Hall $q'$-subgroup of $N$. Then $[\g, R] =\g$ and therefore, by Lemma~\ref{l-ab},
 \begin{equation}\label{e2}
   C_{\g}(R)=1,
 \end{equation}
 since $\g$ is abelian.

 We now consider $\g$ as a right  $\F_qG\langle\f\rangle$-module and extend the ground field to a finite splitting field $k$ of  $G\langle\f\rangle$; let $M$ be the resulting  $kG\langle\f\rangle$-module. The additive group of $M$ is a finite $q$-group, and therefore both $R$ and $\f$ act by coprime automorphisms, to which Lemma~\ref{l-nakr}(c) on covering fixed points applies. Note that by \eqref{e2} we have
 \begin{equation}\label{e3}
C_M(R)=0.
 \end{equation}
 We continue using the same notation for centralizers and commutator subgroups, albeit using the additive structure of $M$.

 We now consider an unrefinable series of $kG\langle\f\rangle$-submodules connecting $0$ with  $M$. We consider  an arbitrary factor $U$ of this series, which is an irreducible $kG\langle\f\rangle$-module. Let the bar denote  the images of elements and subgroups in the action on $U$. Note that $\overline{\g}=1$, so that $\bar N$ is nilpotent. We can sometimes drop the bar when considering the action of $G\langle\f\rangle$ on $U$; for example,
     \begin{equation}\label{e4}
C_U(R)=0
 \end{equation}
 by \eqref{e3} and Lemma~\ref{l-nakr}. We also have
   \begin{equation}\label{e5}
[U,C_R(\f)]\leq C_U(\f)
 \end{equation}
  by \eqref{e1}.

  Let $U=W_1\oplus\dots\oplus W_n$ be the decomposition of $U$ into the sum of Wedderburn homogeneous  components with respect to $\bar R$, which is a normal subgroup of $\bar G$, since $\bar N$ is nilpotent.  By Clifford's theorem, the group $ G\langle \f\rangle$ transitively permutes the components $W_i$, and the kernel of this permutational action contains $ RC_{ G}( R)\geq  N$.

  We claim that  $C_R(\f )$ acts non-trivially on one of the components $W_i$. Indeed, otherwise $C_R(\f )$ acts trivially on $U$. But $\bar R$  is the normal closure of $C_{\bar R}(\f )$ because $\bar N$ is nilpotent and is the normal closure of $C_{\bar G}(\f)$. Thus we obtain that $R$ acts trivially on $U$, which contradicts \eqref{e4}.

  For definiteness, let $W_1$ be a component on which  $C_R(\f )$ acts non-trivially. By \eqref{e5} then
$[W_1, C_R(\f )]$ (which is contained in $W_1$)  is a non-trivial subspace of $C_U(\f )$. Hence   $\f$ belongs to the stabilizer of $W_1$. Since $\bar N \leq \bar RC_{\bar G}(\bar R)$ is in the kernel of the permutational action on the set of the $W_i$ and $\f$ acts fixed-point-freely on $G/N$, all orbits of $\f$ on the set of the $W_i$ have length $p$ except the one-element orbit $\{W_1\}$. Indeed, let $S$ be the stabilizer of $W_1$. If $g\not\in S$ and $W_1g\f = W_1g$, then, since $\f\in S$, the coset $Sg$ is  $\f$-invariant. Since $|Sg|$ is coprime to $p=|\f|$, then $\f$ would have a fixed point in $Sg$, contrary to $C_G(\f)\leq N\leq S$.

We claim that $\f$ actually cannot have any orbits of length $p$ on the set of the $W_i$, so that  $\{W_1\}$ is the only orbit of $\f$, which of course means that $U$ is a homogeneous $k\bar R$-module.
Suppose the opposite, and let  $W_1t, W_1{t^{\f }}, \dots  , W_1{t^{\f ^{p-1}}}$ be a regular orbit of $\f$  (here, $W_1{t^{\f ^{i}}}=W_1{t{\f ^{i}}}$, since $\f\in S$). Then $C_R(\f )$ must centralize all these $W_1{t^{\f ^{i}}}$ because of \eqref{e5}:
\begin{equation}\label{e6}
 [W_1{t^{\f ^{i}}}, C_R(\f )]=0 \qquad \text{for } i=0,1,\dots ,p-1.
\end{equation}
For every $i=0,1,\dots ,p-1$, the subgroup
 $C_R(\f )^{t^{\f ^{i}}}$ acts  nontrivially  on $W_1{t^{\f ^{i}}}$ and centralizes all the other $W_1{t^{\f ^{s}}}$ for $s\ne i$. Let tilde denote the images in the action on the direct sum
 $$W_1t\oplus W_1{t^{\f }}\oplus  \dots  \oplus W_1{t^{\f ^{p-1}}}.$$
 Then the
subgroups $\widetilde{C_R(\f )^{t}}, \widetilde{C_R(\f )^{t^\f }}, \dots  , \widetilde{C_R(\f )^{t^{\f ^{p-1}}}}$ commute and generate a direct product
$$\widetilde{C_R(\f )^{t}}\times  \widetilde{C_R(\f )^{t^\f }}\times  \dots  \times  \widetilde{C_R(\f )^{t^{\f ^{p-1}}}},$$
in which the factors are permuted by~$\f$. The diagonal of this direct product is an image of a subgroup
of $C_R(\f )$ and acts non-trivially on these $W_1^t, W_1^{t^{\f }}, \dots  , W_1^{t^{\f ^{p-1}}}$, contrary to~\eqref{e6}. This contradiction shows that $\f$ has no orbits of length $p$.

 Thus, $\{W_1\}$ is the only orbit of $\f$, so that $U=W_1$ is a homogeneous $k\bar R$-module. Since $\bar R$ is nilpotent and $k$ is a splitting field, the centre $Z(\bar R)$ acts by scalar multiplications on~$W_1$. Hence $Z(\bar R)$ commutes with~$\bar \f$. By Lemma~\ref{l-nakr}(c) there is an element  $z\in C_R(\f)$ whose image is a non-trivial element of $Z(\bar R)$. Since  $[U,z]\leq C_U(\f )$ by \eqref{e5} and $U=[U,z]$, we obtain that $[U, \f ]=0$.

 Since the above arguments apply to every irreducible factor of that series in $M$, we obtain by Lemma~\ref{l-nakr}(c) that $[M,\f]=0$, which means that $\g\leq C_G(\f)$, as required.
 \end{proof}

 \begin{lemma}\label{l-gn}
 The order of $\g_{\infty}(N)$ is $m$-bounded.
 \end{lemma}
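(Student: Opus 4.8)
The plan is to obtain an $m$-bound for $|\g|$, where $\g:=\g_\infty(N)$, by repeating the proof of Lemma~\ref{l-gnc} while keeping track of orders. That proof shows $\g\le C_G(\f)$, so every element of $\g$ has a left Engel sink of cardinality at most $m$, and Theorem~\ref{t-f} applied to $\g$ gives that $|\g_\infty(\g)|$ is $m$-bounded; as $\g_\infty(\g)$ is normal and $\f$-invariant in $G$ and $\g/\g_\infty(\g)=\g_\infty(N/\g_\infty(\g))$, we may assume $\g$ is nilpotent. I would then argue by induction on $|G|$ with the present lemma as the inductive hypothesis: passing to $G/\g_q$ and $G/\g_{q'}$ for a Sylow subgroup $\g_q$ of $\g$ (using $\g/\g_q=\g_\infty(N/\g_q)$ and similarly for $\g_{q'}$) reduces us to the case that $\g$ is a $q$-group, and carrying out the remaining reductions of the proof of Lemma~\ref{l-gnc} reduces us to the case that $\g$ is an elementary abelian $q$-group, $R$ is a $\f$-invariant Hall $q'$-subgroup of $N$ with $[\g,R]=\g$ and $C_\g(R)=1$, and $R$ is nilpotent (since $N/\g$ is).

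In this base case I would rerun the Clifford-theoretic part of the proof of Lemma~\ref{l-gnc}: over a splitting field $k$ of $G\langle\f\rangle$, on each irreducible constituent $U$ the normal nilpotent subgroup $\bar R$ acts homogeneously, $Z(\bar R)$ acts by a non-trivial scalar, and by the covering of fixed points (Lemma~\ref{l-nakr}(c)) that scalar is induced by some $z\in C_R(\f)$. Hence $z$, and so $C_R(\f)$, acts fixed-point-freely on $U$; since this holds for every $U$ and $C_R(\f)$ acts coprimely on $\g$, we conclude $C_\g(C_R(\f))=1$, that is, $\g=[\g,C_R(\f)]$. For each $q'$-element $x\in C_R(\f)\subseteq C_G(\f)$, Lemma~\ref{l-sink} gives $[\g,x]={\mathscr E}_{\g\langle x\rangle}(x)\subseteq{\mathscr E}(x)$, so $|[\g,x]|\le m$, and Lemma~\ref{l33} then yields that $|\g|=|[\g,C_R(\f)]|$ is $m$-bounded.

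The main obstacle will be the reduction to the base case. In the proof of Lemma~\ref{l-gnc} the reductions involving $O^q(\g)$ and $\Phi(\g)$ end with ``$\g\le C_G(\f)$'' or with a contradiction, whereas here they must yield an $m$-bound on $|\g|$; the critical case is $\Phi(\g)\neq1$, which seems to require first bounding $|\g/\Phi(\g)|$ (by the argument of the second paragraph applied to that section) and, via the theorem that an automorphism of a $q$-group acting trivially on the Frattini quotient is a $q$-automorphism, the order of $R$ (which then embeds into $\mathrm{Aut}(\g/\Phi(\g))$), and then bounding the nilpotency class of $\g$ by the classical estimate for a $q$-group with a fixed-point-free coprime group of automorphisms of $m$-bounded order, so that $|\g|$ is bounded as the product of the orders of the (each $m$-bounded) lower central factors of $\g$. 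A related delicate point is that the Clifford argument of Lemma~\ref{l-gnc} exploits the action of $\f$ on $\g$, which is trivial as soon as $\g\le C_G(\f)$ is known, so this induction should be organised so that the Clifford argument is applied to a stage at which $\f$ still acts non-trivially on $\g$ (through the subgroup $[V,\f]$ occurring in the reductions of Lemma~\ref{l-gnc}).
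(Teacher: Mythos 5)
Your base case (elementary abelian $\g$, homogeneity of the $k\bar R$-action, scalar action of $Z(\bar R)$ induced from some $z\in C_R(\f)$, then Lemma~\ref{l-sink} and Lemma~\ref{l33}) is plausible, but the reduction to it --- which you yourself flag as ``the main obstacle'' --- is a genuine gap, and the fix you sketch does not work. There is no ``classical estimate'' bounding the nilpotency class of a $q$-group in terms of the order of a fixed-point-free coprime group of automorphisms (already a fixed-point-free automorphism of order $4$ leaves the class unbounded), and in any case fixed-point-freeness on a non-abelian $\g$ is not established: $[\g,R]=\g$ yields $C_\g(R)=1$ only via Lemma~\ref{l-ab}, i.e.\ only in the abelian case, so the $\Phi(\g)\ne 1$ step is unsupported. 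Even granting bounded class, each lower central factor of $\g$ would still need its own bound, and your induction on $|G|$ multiplies bounds (e.g.\ $|\g|\le|\g/\g_q|\,|\g/\g_{q'}|$), so the bounding function is not preserved by the induction as organised. Finally, your closing ``delicate point'' rests on a misconception: under the standing assumption $p>m$, Lemma~\ref{l-gnc} gives $\g_\infty(N)\le C_G(\f)$ in every relevant group, so there is no stage at which $\f$ acts non-trivially on $\g_\infty(N)$; the Clifford argument you want does not in fact need such a stage (the inclusion \eqref{e1} holds trivially once $\g\le C_G(\f)$ is known), but planning the induction around this nonexistent stage shows the reduction has not actually been carried out.

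The paper's proof takes a completely different and much shorter route, and its key idea is missing from your proposal: transfer the sink condition from $C_G(\f)$ to all of $N$. Since $N/\g_\infty(N)$ is nilpotent, Lemma~\ref{l-copr} together with $\g_\infty(N)\le C_N(\f)$ writes every $n\in N$ as $n=[g,\f]c$ with $c\in C_N(\f)$; because $\g_\infty(N)\le C_G(\f)$ is normal in $G\langle\f\rangle$, its centralizer is normal and contains $\f$, hence contains $[g,\f]$, so once a commutator $[x,\,{}_k n]$ has fallen into $\g_\infty(N)$ (which happens since $G/N$ is nilpotent by Thompson's theorem and $N/\g_\infty(N)$ is nilpotent), further commutation by $n$ coincides with commutation by $c$. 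Thus $\mathscr E(n)\subseteq\mathscr E(c)$ has cardinality at most $m$ for every $n\in N$, and Theorem~\ref{t-f} applied to $N$ itself bounds $|\g_\infty(N)|$, making any quantitative re-run of the Lemma~\ref{l-gnc} induction unnecessary.
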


 \begin{proof}
   By Lemma~\ref{l-copr} every element of  $N/\g_{\infty}(N)$ can be written as a product of a commutator $[h,\f ]$ for $h\in N/\g_{\infty}(N)$ and an element from $C_{N/\g_{\infty}(N)}(\f )=C_{N}(\f )\g_{\infty}(N)/\g_{\infty}(N)$. Since  $\g_{\infty}(N)\leq C_{N}(\f )$ by Lemma~\ref{l-gnc}, it follows that every element    $n\in N$ can be written as a product $n=[g,\f ]c$ for $g\in N$ and $c\in C_{N}(\f )$.

We now claim that $\mathscr E(n)=\mathscr E(c)$. This follows from the fact that $[g,\f]$ centralizers $\g_{\infty}(N)$. Indeed, since $\g_{\infty}(N)\leq C_{N}(\f )$ by Lemma~\ref{l-gnc} and $\g_{\infty}(N)$ is a normal subgroup of $G\langle\f\rangle$, the centralizer $C_G(\g_{\infty}(N))$ is a normal subgroup that contains $\f$ and therefore also contains $[g,\f ]$.

The quotient $G/N$ admits a fixed-point-free automorphism $\f$ of prime order and therefore  is nilpotent by Thompson's theorem~\cite{tho}. Therefore for any $x\in G$ we have $[x,\,{}_sn]\in N$ for some $s$, and then $[x,\,{}_{s+t}n]\in \g_{\infty}(N)$ for some $t$. Since $[g,\f]$ centralizers $\g_{\infty}(N)$ as shown above, we further have  $[x,\,{}_{s+t+u}n]=[[x,\,{}_{s+t}n],\,{}_uc]$ and this element belongs to $\mathscr E(c)$ for large enough~$u$.

As a result, all elements of $N$ have Engel sinks of size at most $m$ and therefore $\g_{\infty}(N)$ has $m$-bounded order by Theorem~\ref{t-f}.
\end{proof}

 We now finish the proof of Theorem~\ref{t-main}. In the remaining case $p>m$ we have $\g_{\infty}(N)$ of $m$-bounded order by Lemma~\ref{l-gn}. Then $C_G(\g_{\infty}(N))$ has $m$-bounded index and is metanilpotent, since $C_G(\g_{\infty}(N))/(N\cap C_G(\g_{\infty}(N))$ is nilpotent and $N\cap C_G(\g_{\infty}(N))$ is nilpotent by Lemma~\ref{l-gf}.
  \end{proof}

\section{Completion of the proof for right Engel sinks}\label{s-r}

\begin{proof}[Proof of Theorem~\ref{t-mainr}] Recall that $G$ is a finite group admitting  an automorphism $\f$ of prime order  $p$ coprime to $|G|$, and every element of $C_G(\f)$ has a right Engel sink of cardinality at most $m$; we need to prove that  $|G/F(G)|$ is $m$-bounded. By Corollary~\ref{l-srs}(b) the index of the soluble radical $|G/S(G)|$ is $m$-bounded. Therefore we can assume that $G$ is soluble.

Since $G/F_2(G)$ embeds in the automorphism group of $F_2(G)/F(G)$, we can assume that $G=F_2(G)$, so that $G/F$ is nilpotent. We choose Thompson's critical subgroup $C_p$ in each Sylow $p$-subgroup $P$ of $G/F$, which is a characteristic subgroup of $P$ such that  $C/Z(C)$ is an elementary
abelian $p$-group and $C_P(C_p)=Z(C)$ (see  \cite[Theorem~5.3.11]{gor}). Then the product $C=\prod_pC_p$ is a characteristic subgroup of $G/F$ such that $C/Z(C)$ is a direct product of elementary
abelian groups and $C_{G/F}(C)=Z(C)$. Since $(G/F)/Z(C)$ embeds into the automorphism group of $C$, it is sufficient to prove that $|C|$ is $m$-bounded. Therefore we can replace $G$ by the inverse image of $C$ and assume that $G/F=C$.

Since $F(G)/Z(G)=F(G/Z(G)$ we can assume that $Z(G)=1$.

Consider the Fitting subgroup $F(G\langle\f\rangle)$ of the semidirect product $G\langle\f\rangle$. We have $F(G\langle\f\rangle)\cap G=F(G)$, so it suffices to show that the index of $F(G\langle\f\rangle)$ in $G\langle\f\rangle$ is $m$-bounded. By Gasch\"utz's theorem  \cite[Satz~III.4.2]{hup} the image of the Fitting subgroup in the quotient  of $F(G\langle\f\rangle)$ by its Frattini subgroup is the Fitting subgroup of this quotient. Since the hypothesis is inherited by this quotient, we can assume that  the Frattini subgroup of  $F(G\langle\f\rangle)$ is trivial. Then $F(G\langle\f\rangle)$ is a direct product of minimal normal subgroups of $G\langle\f\rangle$ by  Gasch\"utz's theorem  \cite[Satz~III.4.5]{hup}. Therefore $F(G)$
is a direct product of minimal normal $\f$-invariant subgroups, which
are elementary abelian $q$-groups for various~$q$. To lighten the notation,  in what follows  we write $F=F(G)$.

\begin{lemma}\label{l-gfmb}
The centralizer $C_{G/F}(\f)$ of $\f$ in $G/F$ has  $m$-bounded order.
\end{lemma}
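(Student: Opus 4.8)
The plan is to analyse the nilpotent group $\bar G:=G/F$ one prime at a time. Since $\bar G$ is nilpotent, $\bar G=\prod_q O_q(\bar G)$, and as each Sylow subgroup $O_q(\bar G)$ is $\f$-invariant, $C_{\bar G}(\f)=\prod_q C_{O_q(\bar G)}(\f)$; by Lemma~\ref{l-nakr}(c) this group equals $C_G(\f)F/F$. So it suffices to establish (i) that $|C_{O_q(\bar G)}(\f)|$ is $m$-bounded for each prime $q$, and (ii) that the set of primes $q$ with $C_{O_q(\bar G)}(\f)\ne1$ is $m$-bounded. Throughout I use that $F=F(G)$ is abelian (hence $C_G(F)=F$) and that $O_q(G)$ coincides with the Sylow $q$-subgroup $F_q$ of $F$.

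Fix such a prime $q$ and a $\f$-invariant Sylow $q$-subgroup $R$ of $G$; then $O_q(\bar G)=RF/F$, $C_{O_q(\bar G)}(\f)=C_R(\f)F/F=:\bar X_q$, and we may pick $x\in C_R(\f)$ with $x\notin F$. Set $V:=F$ if $q\nmid|F|$, and $V:=F_{q'}$, the Hall $q'$-subgroup of $F$, if $q\mid|F|$; in the latter case $V\ne1$, for otherwise $F=F_q$, the Sylow $q$-subgroup $R$ would contain $F$, and $R=RF\trianglelefteq G$ would be a normal $q$-subgroup properly larger than $O_q(G)=F$. In either case $V$ is an abelian $\f$-invariant subgroup of $F$ on which $R$ acts coprimely, and the Engel-sink hypothesis enters as follows: for $y\in C_R(\f)\subseteq C_G(\f)$, Lemma~\ref{l-sink} gives $[V,y]={\mathscr R}_{V\langle y\rangle}(y)$; since the commutators $[y,v,\dots,v]$ with $v\in V\langle y\rangle$ stay inside $V\langle y\rangle$, we get $[V,y]={\mathscr R}_{V\langle y\rangle}(y)\subseteq{\mathscr R}(y)$, so $|[V,y]|\le|{\mathscr R}(y)|\le m$. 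Applying Lemma~\ref{l33} to $U:=C_R(\f)$ acting on $V$ then shows that $|[V,U]|$ is $m$-bounded, and since $U$ acts coprimely on the abelian group $V$ we have $V=[V,U]\oplus C_V(U)$, so $U/C_U(V)$ embeds in $\mathrm{Aut}([V,U])$ and $|U/C_U(V)|$ is $m$-bounded.

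It remains to see that $C_U(V)\le F$ and that $q$ is $m$-bounded. If $V=F$ this is immediate: $C_U(F)\le C_G(F)=F$ and $R\cap F=1$, so $C_U(F)=1$, $\bar X_q\cong U$ is $m$-bounded, and as $x\notin C_G(F)$ the commutator $[F,x]$ is a nontrivial group of order at most $m$ on which $x$ acts fixed-point-freely, whence $q$ divides $|\mathrm{Aut}([F,x])|$ and is $m$-bounded. If $V=F_{q'}$, I would first prove the structural fact that $C_G(F_{q'})/F$ is a $q'$-group: since $G=F_2(G)$ this quotient is nilpotent, and if $Q$ denotes the preimage of its Sylow $q$-subgroup then $Q\trianglelefteq G$, $[Q,F_{q'}]=1$, and $Q/F_{q'}$ is a $q$-group (because $Q/F$ and $F/F_{q'}$ are), so $Q=F_{q'}\times Q_q$ for a characteristic, hence $G$-invariant, $q$-subgroup $Q_q\le O_q(G)=F_q$, forcing $Q=F$. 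Granting this, $C_U(F_{q'})=U\cap C_G(F_{q'})$ is a $q$-subgroup of $C_G(F_{q'})$ and so maps trivially to the $q'$-group $C_G(F_{q'})/F$, i.e.\ $C_U(F_{q'})\le F$; thus $\bar X_q\cong U/(U\cap F)$ is a quotient of $U/C_U(F_{q'})$ and is $m$-bounded, while $[F_{q'},x]\ne1$ (otherwise $x$ would be a $q$-element of the $q'$-group $C_G(F_{q'})/F$, so $x\in F$), so $q$ is again $m$-bounded. Hence every prime $q$ with $\bar X_q\ne1$ is $m$-bounded, there are $m$-boundedly many such primes, and (i) together with (ii) yields the lemma.

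I expect the main difficulty to be the case $q\mid|F|$: there the sink hypothesis only controls the action of a $q$-element of $C_G(\f)$ on the coprime part $F_{q'}$ of $F$ and says nothing about its action on $F_q$, so the structural lemma that $C_G(F_{q'})/F$ is a $q'$-group is needed precisely to push the ``uncontrolled'' subgroup $C_{C_R(\f)}(F_{q'})$ back into $F$.
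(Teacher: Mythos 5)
Your proposal is correct, and at its core it runs on the same engine as the paper's proof: for a $q$-element $y$ of $C_G(\f)$ one has $[F_{q'},y]\subseteq{\mathscr R}(y)$ by Lemma~\ref{l-sink}, so $|[F_{q'},y]|\le m$, and Lemma~\ref{l33} then bounds the group of automorphisms induced on $F_{q'}$. The differences are in how the kernel of this action is handled and in the bookkeeping. The paper works directly with a Sylow $q$-subgroup $Q$ of $C_{G/F}(\f)$, lifts its elements to $q$-elements of $C_G(\f)$, and shows the action of $Q$ on $F_{q'}$ is faithful by observing that a $q$-element centralizing $F_{q'}$ centralizes all factors of a chief series (the factors inside $F_q$ and above $F$ being handled by nilpotency of $G/F$) and hence lies in $F$ by the chief-factor characterization of the Fitting subgroup \cite[5.2.9]{rob}; this makes the argument uniform in $q$, with no case split. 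You instead work with $U=C_R(\f)$ for a $\f$-invariant Sylow $q$-subgroup $R$ of $G$ (using the standard coprime-action fact that $C_R(\f)$ is a Sylow $q$-subgroup of $C_G(\f)$, which you assert rather than prove, but which is indeed standard), bound $|U/C_U(F_{q'})|$, and then push $C_U(F_{q'})$ into $F$ via your self-contained structural lemma that $C_G(F_{q'})/F$ is a $q'$-group, proved by taking the preimage $Q$ of its Sylow $q$-subgroup, noting $Q\trianglelefteq G$, $Q=F_{q'}\times Q_q$ with $Q_q\le O_q(G)\le F$, hence $Q=F$. That lemma is exactly equivalent to the paper's faithfulness claim, but your proof of it is more elementary (no appeal to chief factors), at the modest cost of the case distinction $q\mid|F|$ versus $q\nmid|F|$ and a slightly longer route to bounding the prime $q$ (via $q$ dividing $|{\rm Aut}([V,x])|$, where the paper simply uses $q\le|Q|$). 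Both versions then conclude identically: each Sylow factor of the nilpotent group $C_{G/F}(\f)$ has $m$-bounded order and the relevant primes are $m$-bounded, so $|C_{G/F}(\f)|$ is $m$-bounded.
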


\begin{proof} Let $q$ be any prime dividing $|C_{G/F}(\f)|$, and let $Q$ be a Sylow $q$-subgroup of $C_{G/F}(\f)$. Consider any non-trivial element $g\in Q$, and let $\hat g$ be a $q$-element of $G$  that is a pre-image of $g$ chosen in $C_G(\f)$ in accordance with Lemma~\ref{l-copr}. Then $\hat g$ induces by conjugation a non-trivial coprime automorphism of the Hall $q'$ subgroup $F_{q'}$ of $F$. Indeed, if  $\hat g$ centralized $F_{q'}$, then $\hat g$ would centralize all factors of a principal series of $G$ and would belong to $F$ by \cite[5.2.9]{rob}. We have $[F_{q'},\hat g]\subseteq \mathscr R(\hat g)$ by Lemma~\ref{l-sink}. Hence $[F_{q'},\hat g]=[F_{q'},  g]$ has $m$-bounded order for any $g\in Q$. Then $|Q|$ is $m$-bounded by Lemma~\ref{l33}. In particular, the prime $q$ is bounded above in terms of $m$. As a result,  $|C_{G/F}(\f)|$ is $m$-bounded.
\end{proof}

Due to Lemma~\ref{l-gfmb} we can use induction on $|C_{G/F}(\f)|$ in the proof of Theorem~\ref{t-mainr} to show that we can assume that $C_{G/F}(\f)=1$. Namely, if $C_{G/F}(\f)\ne 1$, then for some prime $q$ and a Sylow $q$-subgroup $Q$ of $G/F$, there is a non-trivial element  $c\in C_{Q}(\f)$. As before, $[F_{q'},c]\ne 1$. If $c\not\in Z(Q)$, then, since $Q/Z(Q)$ is elementary abelian, by Maschke's theorem there is a normal $\f$-invariant subgroup $Q_1$ of $Q$ of index $q$ not containing $c$. Since here $q$ is $m$-bounded, the inverse image $G_1$  of the product of $Q_1$ and the Hall $q'$-subgroup of $G/F$ is a $\f$-invariant subgroup of $m$-bounded index with $F(G_1)=F$ such that $|C_{G_1/F}(\f)|<|C_{G/F}(\f)|$. Then the induction hypothesis applied to $G_1$ completes the proof. If, however,  $c\in Z(Q)$, then let  $\hat c$ be a $q$-element of $G$  that is a pre-image of $c$ chosen in $C_G(\f)$ in accordance with Lemma~\ref{l-copr}. Then $[F_{q'},c]$ is a normal $\f$-invariant subgroup of $G$, since $c\in Z(G/F)$ and $F$ is nilpotent. Furthermore,  $[F_{q'},c]\subseteq \mathscr R(\hat c)$ by Lemma~\ref{l-sink}, and therefore $[F_{q'},c]$ has  $m$-bounded order. We obtain that $C_G([F_{q'},c])$ is a $\f$-invariant subgroup of $m$-bounded index, which does  not contain $c$, since $[[F_{q'},c],c]=[F_{q'},c]\ne 1$.  Then the induction hypothesis applied to $C_G([F_{q'},c])$ completes the proof.

Thus, we can assume that $C_{G/F}(\f)=1$ for the rest of the proof. This means that $(G/F)\langle\f\rangle$ is a Frobenius group with kernel $G/F$ and complement $\langle\f\rangle$ acting on $F$. It follows by Lemma~\ref{l-frob} that every $\f$-invariant  normal section $S$ of $G$ such that $C_S(G/F)=1$ is the normal closure of $C_S(\f)$ under the action of $G$:
\begin{equation}\label{e-clos}
  S=\langle C_S(\f)^g\mid g\in G\rangle\qquad \text{if }C_S(G/F)=1.
\end{equation}

Recall that  $F$ is a direct product of minimal normal $\f$-invariant subgroups (which are elementary abelian $q$-groups for various, not necessarily different, primes $q$):
\begin{equation}\label{e-prod}
F=V_1\times \dots\times V_k.
\end{equation}
Let  $V$ be one of these factors $V_i$, and let $\bar  G=G/C_G(V)$ be the image of $G$ in the action by conjugation on $V$. We record some properties of the action of $G$ on $V$ in the following lemma.

\begin{lemma}\label{e-ch0}
If $V$ is an elementary abelian $q$-group, then $ \bar G$ is a non-trivial $q'$-group and $C_V(G)=1$.
\end{lemma}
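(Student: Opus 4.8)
The lemma has two parts. The plan is to prove $C_V(G)=1$ immediately, and then to establish, in two steps, that $\bar G:=G/C_G(V)$ is a non-trivial $q'$-group: first that $\bar G$ is nilpotent, and then that its Sylow $q$-subgroup is trivial.

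First, since $V\le F\le G$, any element of $V$ centralized by all of $G$ lies in $Z(G)$; as we have reduced to $Z(G)=1$, this yields $C_V(G)\le Z(G)=1$. Moreover $\bar G\ne 1$, for otherwise $[V,G]=1$, forcing $V\le Z(G)=1$, contrary to $V$ being a non-trivial factor in \eqref{e-prod}. Next, as $F=V_1\times\dots\times V_k$ is a direct product of abelian groups, it is abelian, so $V\le F$ gives $F\le C_G(V)$; hence $\bar G$ is a quotient of $G/F$, which is nilpotent because $G=F_2(G)$. Therefore $\bar G$ is nilpotent, and since a nilpotent group with trivial $O_q$ is a $q'$-group, it suffices to prove $O_q(\bar G)=1$. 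For this I would view $V$ as a module over $\F_q$ for $G\langle\f\rangle$: since $V$ is a minimal normal subgroup of $G\langle\f\rangle$, it is an irreducible $\F_q[G\langle\f\rangle]$-module. Put $K=C_{G\langle\f\rangle}(V)$ and $\widehat G=G\langle\f\rangle/K$, so that $V$ is a faithful irreducible $\F_q\widehat G$-module. Then $O_q(\widehat G)=1$: the fixed-point submodule $C_V\big(O_q(\widehat G)\big)$ is non-zero (a non-trivial $q$-group always fixes a non-zero vector of a non-zero $\F_q$-module), hence equals $V$ by irreducibility, and faithfulness forces $O_q(\widehat G)=1$. Since $C_G(V)=K\cap G$, the group $\bar G\cong GK/K$ is normal in $\widehat G$, so its characteristic subgroup $O_q(\bar G)$ is a normal $q$-subgroup of $\widehat G$; hence $O_q(\bar G)\le O_q(\widehat G)=1$. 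As $\bar G$ is nilpotent, this gives $q\nmid|\bar G|$, completing the proof.

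The one delicate point is the vanishing of $O_q(\bar G)$: one must not attempt to deduce it from faithfulness of the $\bar G$-action on $V$ alone, since a non-trivial $q$-group can act faithfully on an $\F_q$-space. The resolution is to exploit irreducibility of $V$ over the larger group $G\langle\f\rangle$ (not merely over $G$, where $V$ need not be irreducible) to kill $O_q$ in the faithful quotient $\widehat G$, transfer it down along the normal embedding $\bar G\trianglelefteq\widehat G$, and combine this with the nilpotency of $\bar G$ supplied by the hypothesis $G=F_2(G)$.
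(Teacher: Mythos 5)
Your proposal is correct and takes essentially the same route as the paper: the paper also rules out a non-trivial Sylow $q$-subgroup $Q$ of the nilpotent group $\bar G$ by noting that its fixed points $Z(VQ)\cap V=C_V(Q)$ would form a non-trivial proper $G\langle\f\rangle$-invariant subgroup of $V$, contradicting the minimality of $V$, and it obtains $\bar G\ne 1$ and $C_V(G)=1$ from $Z(G)=1$ and minimality. Your detour through $\widehat G=G\langle\f\rangle/C_{G\langle\f\rangle}(V)$ and the fact that $O_q(\widehat G)=1$ for a faithful irreducible $\F_q\widehat G$-module is just a clean repackaging of that same fixed-point argument, with the nilpotency of $\bar G$ (via $F\le C_G(V)$ and $G=F_2(G)$) playing the identical role.
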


\begin{proof}
Indeed, suppose that $Q$ is a nontrivial Sylow $q$-subgroup of $\bar G$.   Since $\bar G$ is nilpotent, then $Z(VQ)\cap V$ is a proper $G$-invariant subgroup of  $V$,  which is  also $\f$-invariant. This contradicts the minimality of $V$. Since $Z(G)=1$ by our assumption, we have $\bar G\ne 1$. Since $C_V(G)$ is normal in $G\langle\f\rangle$, we must have $C_V(G)=1$ by the minimality of $V$.
\end{proof}

We now prove a key lemma in the proof of Theorem~\ref{t-mainr}.

\begin{lemma}\label{l-mainr}
For any factor $V=V_i$ in the product \eqref{e-prod}, the order of $\bar G=G/C_G(V)$ is $m$-bounded.
\end{lemma}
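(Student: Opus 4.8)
My plan is to bound first the $\F_q$-dimension of $C_V(\f)$ in terms of $m$, and then to deduce a bound on $|\bar G|$ from the facts that $\bar G$ acts faithfully on $V$ and that $V$ is generated, as a module, by $C_V(\f)$. Recall that $V=V_i$ is a minimal normal subgroup of $G\langle\f\rangle$, hence an irreducible $\F_qG\langle\f\rangle$-module, and, since $C_G(V)$ acts trivially on it, an irreducible $\F_q\bar G\langle\f\rangle$-module; by Lemma~\ref{e-ch0} the group $\bar G$ is a non-trivial nilpotent $q'$-group acting faithfully on $V$ with $C_V(\bar G)=1$, so by \eqref{e-clos} we have $V=\langle C_V(\f)^{\bar g}\mid\bar g\in\bar G\rangle$ and in particular $C_V(\f)\ne 1$. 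We write $V$ additively below.

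\emph{Step 1 (orbit bound).} Let $v\in C_V(\f)$. Then $v\in C_G(\f)$, so $|{\mathscr R}(v)|\le m$, and moreover ${\mathscr R}(v)\subseteq V$ because every commutator $[v,\,{}_rx]$ lies in the normal subgroup $V$. Given $\bar g\in\bar G$, choose a $q'$-element $x\in G$ mapping onto $\bar g$ (possible since $\bar G$ is a $q'$-group). Then $x$ acts coprimely on $V$, so by the familiar computation with coprime action (cf.\ the proof of Lemma~\ref{l-sink}) the sequence $[v,\,{}_rx]=v(x-1)^r$ is periodic for $r\ge 1$, whence $v^{x}-v=[v,x]$ recurs and therefore belongs to ${\mathscr R}(v)$. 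As $x$ and $\bar g$ induce the same map on $V$, this gives $v^{\bar g}-v\in{\mathscr R}(v)$ for every $\bar g\in\bar G$, so the $\bar G$-orbit $O_v:=\{v^{\bar g}\mid\bar g\in\bar G\}$ satisfies $|O_v|=|O_v-v|\le|{\mathscr R}(v)|\le m$.

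\emph{Step 2 (dimension bound) and conclusion.} Since $V$ is irreducible, it is a cyclic $\F_q\bar G\langle\f\rangle$-module; fix a generator $0\ne v\in C_V(\f)$, so that $V$ is the $\F_q$-span of $\{v^{g}\mid g\in\bar G\langle\f\rangle\}$. Writing $g=\bar h\f^{j}$ with $\bar h\in\bar G$, and using $v^{\f}=v$ and $\bar G\trianglelefteq\bar G\langle\f\rangle$, one has $v^{g}=(v^{\bar h})^{\f^{j}}$; moreover $\f$ maps $O_v$ into itself, since $(v^{\bar h})^{\f}=v^{\f^{-1}\bar h\f}\in O_v$. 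Hence $\{v^{g}\mid g\in\bar G\langle\f\rangle\}=O_v$, so $\dim_{\F_q}V\le|O_v|\le m$, and in particular $d:=\dim_{\F_q}C_V(\f)\le m$. Now pick an $\F_q$-basis $v_1,\dots,v_d$ of $C_V(\f)$, and for each $i$ let $N_i\trianglelefteq\bar G$ be the kernel of the permutation action of $\bar G$ on the orbit $O_{v_i}$; then $\bar G/N_i$ embeds into $\mathrm{Sym}(O_{v_i})$, so $|\bar G/N_i|\le m!$. Since $N_i$ is the pointwise stabilizer of $\langle v_i^{\bar G}\rangle$ and the $v_i$ span $C_V(\f)$, the intersection $\bigcap_{i}N_i$ is the pointwise stabilizer of $\langle C_V(\f)^{\bar G}\rangle=V$, i.e.\ $\bigcap_i N_i=C_{\bar G}(V)=1$. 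Therefore $|\bar G|\le\prod_{i=1}^{d}|\bar G/N_i|\le(m!)^{d}\le(m!)^{m}$, which is $m$-bounded.

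I expect Step 2 to be the crux. The right Engel sink hypothesis, via Step 1, only controls the $\bar G$-orbits of the $\f$-fixed vectors of $V$, and the real content is that this already forces $\dim_{\F_q}C_V(\f)$ to be $m$-bounded; this works precisely because $V$ is a cyclic $\F_q\bar G\langle\f\rangle$-module admitting a generator fixed by $\f$ whose $\bar G$-orbit is $\f$-invariant, so the whole module collapses onto the span of a set of size at most $m$. A secondary point requiring care is in Step 1, where the lift $x$ of $\bar g$ must be chosen as a $q'$-element, so that the commutators $[v,\,{}_rx]$ genuinely recur and hence lie in the \emph{smallest} sink ${\mathscr R}(v)$; note also that, unlike a direct application of Lemma~\ref{l33}, this argument does not attempt to bound $|V|$ (the prime $q$ and $\dim V$ aside, $|V|$ need not be $m$-bounded) but only $|\bar G|$.
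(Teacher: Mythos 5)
Your proof is correct and follows essentially the same route as the paper: you show that the $\bar G$-orbit of a $\f$-fixed vector $v\in V$ has size at most $m$ by observing that under the coprime action the sequence $[v,\,{}_rx]$ is periodic so that $[v,x]$ itself lies in $\mathscr R(v)$ (the paper gets the same conclusion via $[c,\,{}_{q^n}x]=[c,x^{q^n}]$ in characteristic $q$ and surjectivity of the $q^n$-power map on the $q'$-group $\bar G$), and you then kill the kernel of the action on such orbits using generation of $V$ by $C_V(\f)^{\bar G}$ and faithfulness of $\bar G$ on $V$, which is exactly the paper's step with the normal core of $C_{\bar G}(c)$. The only inefficiency is the detour through $\dim_{\F_q}V\le m$ and a basis of $C_V(\f)$: since you already prove that $V$ is spanned by the single orbit $O_v$, the kernel of the permutation action on $O_v$ is trivial and $|\bar G|\le m!$ follows at once.
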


\begin{proof}
Let $V$ be an elementary abelian $q$-group. Since  $\bar G\ne 1$, we have  $V= \langle C_V(\f)^g\mid g\in G\rangle$ by \eqref{e-clos}.  Let
 $1\ne c\in C_V(\f)$.  For any $x\in \bar G$ and any positive integer $n$ we have $[c,\,{}_{q^n}x]= [c,x^{q^n}]$. Indeed, regarding $V$ as an ${\Bbb F}_q\bar G$-module, we see that
 $$[c,\,{}_{q^n}x]= c(x-1)^{q^n}=c(x^{q^n}-1)=[c,x^{q^n}],$$
 in characteristic $q$. For all large enough $n$ the element $[c,\,{}_{q^n}x]$ belongs to the right Engel sink $\mathscr R(c)$. Choosing $n$ to be the same large enough integer for all elements  $x\in \bar G$, we obtain that $[c,x^{q^n}]$ takes at most $m$ values when $x$ runs over $\bar G$, since $|\mathscr R(c)|\leq m$. Since $\bar G$ is a finite $q'$-group by Lemma~\ref{e-ch0}, the elements $x^{q^n}$  run over the entire  $\bar G$ as $x$ does.  Thus,  $[c,x]$ takes at most $m$ values when $x$ runs over $\bar G$, which  means that the index of the centralizer $C_{\bar G}(c)$ in $\bar G$ is at most $m$. The intersection $Z=\bigcap_{g\in \bar G} C_{\bar G}(c)^g$ is a $G$-invariant subgroup of $m$-bounded index, and this intersection is also $\f$-invariant, since $C_{\bar G}(c)$ is. Then $[c^g,Z]=0$ for all $g\in G$. But $V= \langle c^g\mid g\in \bar G\rangle$, because $V$ is a minimal normal subgroup of $G\langle\f\rangle$, while the right-hand side is both $G$- and $\f$-invariant. Hence, $[V,Z]=0$ or, in other words,  $\bar Z=1$, so that $\bar G$ has $m$-bounded order.
\end{proof}

We now finish the proof of Theorem~\ref{t-mainr}. Recall that any element of $G/F$ acts nontrivially on at least one of the factors $V_i$ in \eqref{e-prod}. Let $1\ne c_1\in C_{V_1}(\f)$, which exists by \eqref{e-clos}. Then the right Engel sink of $c_1$ is non-trivial:
 $\mathscr R(c_1)\ne \{1\}$. Indeed, otherwise $c_1$ belongs to the hypercentre $\zeta_{\infty}(G)$   by Baer's theorem \cite[12.3.7]{rob}, and then  $V_{1}\leq  \zeta_{\infty}(G)$  by the minimality of $V_{1}$. This, however, contradicts Lemma~\ref{e-ch0}. Therefore we can choose $a_1\in G$ such that
 $$
 [c_1,\,{}_na_1]\ne 1 \quad \text{for any }n\in \N.
 $$

 If $C_G(V_1)=F$, then $|G/F|$ is $m$-bounded by Lemma~\ref{l-mainr} and the proof is complete. Otherwise $C_G(V_1)$ acts non-trivially on at least one of the remaining factors in \eqref{e-prod}. Without loss of generality, we can assume that $[V_2,C_G(V_1)]\ne 1$. Let  $1\ne c_2\in C_{V_2}(\f)$, which exists by \eqref{e-clos}. Then the right Engel sink of $c_2$ in the product $V_2C_G(V_1)$ is non-trivial:
 $\mathscr R_{V_2C_G(V_1)}(c_2)\ne \{1\}$. Otherwise $c_2$ belongs to the hypercentre $\zeta_{\infty}(V_{2}C_G(V_1))$ of the semidirect product $V_{2}C_G(V_1)$ by Baer's theorem \cite[12.3.7]{rob}, and since $V_{2}C_G(V_1)$ is a normal subgroup of $G\langle\f\rangle$, then $V_2\leq \zeta_{\infty}(V_{2}C_G(V_1))$ by the minimality of $V_2$, which contradicts Lemma~\ref{e-ch0}. Therefore we can choose $a_2\in C_G(V_1)$ such that
 $$
 [c_2,\,{}_na_2]\ne 1 \quad \text{for any }n\in \N.
 $$
 Note that at the same time $[c_1,a_2]=1$.

 If $C_G(V_1V_2)=C_G(V_1)\cap C_G(V_2)=F$, then $|G/F|$ is $m$-bounded by Lemma~\ref{l-mainr} and the proof is complete. Otherwise $C_G(V_1V_2)$ acts non-trivially on at least one of the remaining factors in \eqref{e-prod}, say on $V_3$.  Let  $1\ne c_3\in C_{V_3}(\f)$, which exists by \eqref{e-clos}. The right Engel sink of $c_3$ in the product $V_3C_G(V_1V_2)$ is non-trivial:
 $\mathscr R_{V_3C_G(V_1V_2)}(c_3)\ne \{1\}$, as otherwise $c_3\in \zeta_{\infty}(V_{3}C_G(V_1V_2))$, whence  $V_3\leq \zeta_{\infty}(V_{3}C_G(V_1V_2))$ by the minimality of $V_3$, contrary to Lemma~\ref{e-ch0}. Therefore we can choose $a_3\in C_G(V_1V_2)$ such that
 $$
 [c_3,\,{}_na_3]\ne 1 \quad \text{for any }n\in \N.
 $$
 Note that at the same time $[c_1,a_3]=1$ and $[c_2,a_3]=1$.

 We can continue this construction in the obvious fashion as long as $C_G(V_1V_2\cdots V_k)\ne F$. We claim that this process will stop at  $C_G(V_1V_2\cdots V_k)= F$ for some $k\leq m$. Indeed, otherwise we would have  constructed elements $c_i\in V_i$ and $a_1\in G$ and $a_i\in C_G(V_1\cdots V_i)$ for $i=2,\dots ,m+1$ such that
 $$
 [c_i,\,{}_na_i]\ne 1 \quad \text{for any }n\in \N,\qquad\text{while}\quad  [c_i, a_j]=1\quad\text{for any }j>i.
 $$
Then the product $c=c_1c_2\cdots c_{m+1}$ would have right Engel sink $\mathscr R(c)$ of cardinality at least $m+1$, since  for any given $n_i\in\N$, $i=1,2,\dots,m+1$,   the $m+1$  elements  $[c,\,{}_{n_i}a_i]$ are different. Indeed,  as an element of the product $V=V_1\times\dots\times V_k$,  the commutator $[c,\,{}_{n_i}a_i]$  has trivial components in the factors $V_1,\dots V_{i-1}$ and a non-trivial component in $V_i$. The inequality $|\mathscr R(c)|\geq m+1$ contradicts the hypothesis of the theorem.

This contradiction shows that  $C_G(V_1V_2\cdots V_k)= F$ for some $k\leq m$.
 Then
 $$
 \begin{aligned}
  |G/F|=|G/C_G(V_1\cdots V_k)|&= \big|G/\big(C_G(V_1)\cap\cdots \cap C_G(V_k)\big)\big|\\
  &\leq |G/C_G(V_1)|\cdots |G/C_G(V_k)|
 \end{aligned}
 $$
and we obtain that   $|G/F|$ is $m$-bounded by Lemma~\ref{l-mainr}.
 \end{proof}

 \section*{Acknowledgements}
The first author was supported by the Mathematical Center in Akademgorodok, the agreement with Ministry of Science and High Education of the Russian Federation no.~075-15-2019-1613. The second author was supported by FAPDF and CNPq-Brazil.

\end{document}